\DeclareMathOperator{\ord}{ord}
\DeclareMathOperator{\Gal}{Gal}
\DeclareMathOperator{\Trace}{Trace}
\DeclareMathOperator{\Norm}{Norm}
\DeclareMathOperator{\GL}{GL}
\newcommand{\Q}{\mathbb{Q}}
\newcommand{\Z}{\mathbb{Z}}
\newcommand{\F}{\mathbb{F}}
\newcommand{\OO}{\mathcal{O}}
\newcommand{\fq}{\mathfrak{q}}
\newcommand{\fp}{\mathfrak{p}}
\newtheorem{theorem}{Theorem}
\newtheorem*{thm}{Theorem}
\newtheorem{lemma}{Lemma}[section]
\theoremstyle{definition}
\theoremstyle{remark}
\begin{document}

\title[Local criteria for the unit equation]{Local criteria
for the unit equation\\
 and the asymptotic Fermat's Last Theorem}

\author{Nuno Freitas}

\address{Departament de Matem\`atiques i Inform\`atica,
Universitat de Barcelona (UB),
Gran Via de les Corts Catalanes 585,
08007 Barcelona, Spain}

\email{nunobfreitas@gmail.com}

\author{Alain Kraus}
\address{Sorbonne Universit\'e,
Institut de Math\'ematiques de Jussieu - Paris Rive Gauche,
UMR 7586 CNRS - Paris Diderot,
4 Place Jussieu, 75005 Paris,
France}
\email{alain.kraus@imj-prg.fr}

\author{Samir Siksek}

\address{Mathematics Institute\\
        University of Warwick\\
        CV4 7AL \\
        United Kingdom}

\email{s.siksek@warwick.ac.uk}

\date{\today}
\thanks{Freitas is supported by a Ram\'on y Cajal fellowship with reference RYC-2017-22262.
Siksek is supported by the
EPSRC grant \emph{Moduli of Elliptic curves and Classical Diophantine Problems}
(EP/S031537/1).}
\keywords{Fermat, unit equation}
\subjclass[2010]{Primary 11D41}

\begin{abstract}
Let $F$ be a totally real number field
of odd degree. We prove several purely local
criteria for the asymptotic Fermat's Last Theorem to hold over $F$,
and also for the non-existence of solutions to the unit equation over $F$.
For example, if $2$ totally ramifies and $3$ splits completely in $F$,
then the asymptotic Fermat's Last Theorem holds
over $F$.
\end{abstract}

\maketitle

\section{Introduction}
Let $F$ be a number field.
The \textbf{asymptotic Fermat's Last Theorem over $F$} is the statement that
there exists a constant~$B_F$, depending only on~$F$, such that, for all
primes~$\ell > B_F$, the only solutions to the equation
$x^\ell + y^\ell + z^\ell = 0$, with $x$, $y$, $z \in F$
satisfy $xyz =0$.  A suitable version of the ABC conjecture \cite{Browkin}
over number fields implies asymptotic FLT for $F$ provided $F$
does not contain a primitive cube root of $1$.
The following two theorems (respectively
\cite[Corollary 1.1]{FKS} and \cite[Theorem 4]{FKS2}) are typical examples
of recent work on asymptotic FLT.
\begin{thm}
Let $F$ be a totally real number field. Suppose
\begin{enumerate}[label=(\roman*)] 
\item $h^+_F$ is odd, where $h^+_F$ denotes the narrow
        class number of $F$;
\item $2$ is totally ramified in $F$.
\end{enumerate}
Then the asymptotic Fermat's Last Theorem holds for $F$.
\end{thm}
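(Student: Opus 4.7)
The plan is to apply the modular method. Suppose for contradiction that for arbitrarily large primes $\ell$ there exists a non-trivial solution $(a,b,c)\in\OO_F^3$ to $a^\ell+b^\ell+c^\ell=0$. Since the narrow class group has order coprime to $\ell$ (for $\ell>h_F^+$), after dividing out the common factor of the ideals $(a),(b),(c)$ and choosing totally positive generators, I may assume $a,b,c\in\OO_F$ are pairwise coprime. To this rescaled solution I attach the Frey elliptic curve
\[
E = E_{a,b,c}\colon\quad Y^2 = X(X-a^\ell)(X+b^\ell).
\]
A Tate-algorithm computation shows $E$ is semistable away from $2$; since $2$ is totally ramified there is a unique prime $\fp\mid 2$ in $\OO_F$, and the conductor exponent at $\fp$ is bounded by a constant $r=r(d)$ depending only on $d=[F:\Q]$.

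By the modularity theorem of Freitas--Le Hung--Siksek, $E$ is modular over $F$, and standard irreducibility arguments for Frey curves over totally real fields imply that $\bar\rho_{E,\ell}$ is absolutely irreducible for $\ell$ large. Level lowering (Fujiwara, Jarvis, Rajaei) then produces a Hilbert eigenform $\mathfrak{f}$ of parallel weight $2$ and level dividing $\fp^r$ with $\bar\rho_{E,\ell}\cong \bar\rho_{\mathfrak{f},\mathfrak{l}}$ for some prime $\mathfrak{l}\mid\ell$ of the Hecke eigenvalue field of $\mathfrak{f}$. Since $E$ has $F$-rational full $2$-torsion, a comparison-of-traces argument (using inertia at auxiliary primes and $\ell$ large) forces the Hecke eigenvalues of $\mathfrak{f}$ to be rational, so by Eichler--Shimura $\mathfrak{f}$ corresponds to an elliptic curve $E'/F$ of conductor dividing $\fp^r$ with $F$-rational full $2$-torsion. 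The cross-ratio of the $2$-torsion of $E'$ then yields a pair $(\lambda,\mu)\in\OO_F[1/\fp]^\times\times\OO_F[1/\fp]^\times$ satisfying the $\{\fp\}$-unit equation $\lambda+\mu=1$.

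The main obstacle, and the heart of the argument, is to show that no such $(\lambda,\mu)$ exists. This is a purely local question at $\fp$: since $2$ is totally ramified in $F$, the local field $F_\fp$ is a totally ramified extension of $\Q_2$ of degree $d$, producing a very rigid filtration on its principal units, while the hypothesis that $h_F^+$ is odd removes the global quadratic-twist obstructions that would otherwise let the unit equation escape local constraints. A case analysis on $v_\fp(\lambda)$, together with congruences on $\lambda$ modulo successive powers of $\fp$ derived from $\lambda+\mu=1$, rules out all possibilities and yields the required contradiction.
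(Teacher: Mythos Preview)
First, note that this theorem is not proved in the present paper: it is quoted from \cite{FKS} as background (see the sentence introducing it). So there is no in-paper proof to compare against directly; I will assess your argument on its own terms and against the criterion the paper does supply, namely Theorem~\ref{thm:FS}.

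Your outline is broadly correct up through level lowering and the reduction to an $S$-unit equation with $S=\{\fq\}$, $\fq\mid 2$. The genuine gap is in the final step. You assert that the heart of the argument is to show that \emph{no} $(\lambda,\mu)\in\OO_S^\times\times\OO_S^\times$ with $\lambda+\mu=1$ exists, and that a local case analysis at $\fq$ ``rules out all possibilities''. This is false: the $S$-unit equation always has the solutions $(1/2,1/2)$, $(-1,2)$, $(2,-1)$, as the paper itself points out just before Lemma~\ref{lem:solvals}. No purely local filtration argument at $\fq$ can exclude these. What one must actually prove (this is precisely the hypothesis of Theorem~\ref{thm:FS} in the totally ramified case) is the valuation bound
\[
\max\{\lvert\ord_\fq(\lambda)\rvert,\lvert\ord_\fq(\mu)\rvert\}\le 4\ord_\fq(2)
\]
for every solution; solutions satisfying this bound correspond to Frey-type curves that can be eliminated, while solutions violating it would survive.

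Relatedly, your account of how $h_F^+$ odd enters is not right. It is not used to scale $(a,b,c)$ (that only needs $\ell\nmid h_F$), nor does it ``remove global quadratic-twist obstructions'' in any way you make precise. In \cite{FKS} the oddness of $h_F^+$ is used to control the $2$-part of the $S$-unit group---concretely, it forces every totally positive unit to be a square---and this is what lets one prove the valuation bound above. Your sketch does not contain this idea, and without it the argument does not close.
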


\begin{thm}
Let $F$ be a totally real number field and $p \ge 5$
a rational prime. Suppose
\begin{enumerate}[label=(\roman*)] 
\item $F/\Q$ is a
Galois extension of degree $p^m$ for some $m\ge 1$;
\item $p$ is totally ramified in $F$;
\item $2$ is inert in $F$.
\end{enumerate}
Then the asymptotic Fermat's Last Theorem holds for $F$.
\end{thm}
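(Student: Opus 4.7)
The plan is to follow the now-standard Frey curve route to asymptotic Fermat over totally real fields. Given a primitive putative solution $(a,b,c)\in\OO_F^3$ to $a^\ell+b^\ell+c^\ell=0$ with $\ell$ a large prime, I would permute the variables (using that $[F:\Q]=p^m$ is odd to absorb units and sign issues) and attach the Frey elliptic curve $E=E_{a,b,c}/F$. Modularity of $E$ for $\ell\gg 0$ follows from the Freitas--Le Hung--Siksek theorem for totally real fields, and Ribet-style level lowering for Hilbert modular forms of parallel weight $2$ then produces a newform $\mathfrak g$ whose level is supported only at primes dividing $2\OO_F$. Hypothesis (iii) collapses this to a small power of the single prime $\fq=2\OO_F$.

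One now follows the standard playbook (cf.\ the first theorem quoted above) to match $\mathfrak g$ with an elliptic curve $E'/F$ of conductor dividing a power of $\fq$; using that the Frey curve carries full rational $2$-torsion, one shows that $E'$ admits a rational $2$-isogeny and, in Legendre form $Y^2=X(X-1)(X-\lambda)$, furnishes $\lambda,\,1-\lambda\in\OO^*_{F,\{\fq\}}$. It therefore suffices to prove that the $\{\fq\}$-unit equation $\lambda+\mu=1$ has no solutions in $F$: this is the core purely local/Galois-theoretic assertion to be established.

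The hard part, which I expect to be the main obstacle, is this elimination of the $\{\fq\}$-unit equation, and it is where hypotheses (i) and (ii) finally come into play. Since $2$ is inert and unramified in the Galois extension $F/\Q$, its decomposition group is all of $G:=\Gal(F/\Q)$, forcing $G$ to be cyclic of order $p^m$ and inducing an isomorphism $G\cong\Gal(k_\fq/\F_2)$ where $k_\fq=\F_{2^{p^m}}$; every $\sigma\in G$ therefore acts on reductions as a power of Frobenius. A solution $\lambda$ reduces to some $\bar\lambda\in k_\fq\setminus\{0,1\}$ generating a subfield $\F_{2^{p^j}}$, and the totally ramified prime $\fp$ above $p$ enters via the higher unit filtration $U^{(i)}_\fp$ on which $G$ acts through its tame and wild inertia quotients. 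I would try to reconcile the Frobenius picture at $\fq$ with the ramification-filtration picture at $\fp$ to extract a parity or divisibility contradiction using $p\ge 5$; the exclusion of $p=2,3$ strongly suggests that the small primes admit genuine unit-equation solutions that no such uniform argument could kill.
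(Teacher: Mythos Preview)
This theorem is quoted from another paper and not proved directly here, but Theorem~\ref{thm:pram} of the present paper is a strict generalization (since $\gcd(p^m,p-1)=1$ and no Galois hypothesis is imposed), so your proposal should be measured against that proof.

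Your reduction via modularity and level lowering to an $S$-unit problem with $S=\{\fq\}$ is in line with the black-box Theorem~\ref{thm:FS} that the paper invokes. But your assertion that ``it therefore suffices to prove that the $\{\fq\}$-unit equation $\lambda+\mu=1$ has no solutions in $F$'' is false: $(\lambda,\mu)=(2,-1)$, $(-1,2)$, $(1/2,1/2)$ are solutions over every $F$. What Theorem~\ref{thm:FS} actually demands, in the inert case, is that every solution satisfy $\max(|\ord_\fq\lambda|,|\ord_\fq\mu|)\le 4$ and $\ord_\fq(\lambda\mu)\equiv 1\pmod 3$. The target is control of $\fq$-valuations, not non-existence, so your programme of ruling out all solutions via the Frobenius structure of $k_\fq$ and the higher unit filtration at $\fp$ is aimed at an impossible goal.

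The paper obtains the required valuation control by elementary norm arguments, using neither the Galois hypothesis nor any ramification filtration. From $\gcd(n,(p-1)/2)=1$, Lemma~\ref{lem:pm1} forces every unit of $\OO_F$ to be $\equiv\pm 1\pmod\fp$; this already kills the ordinary unit equation since $p\ge 5$. For a genuine $S$-unit solution with $m_{\lambda,\mu}\ge 2$, Lemma~\ref{lem:simplify} produces an integral solution with $\mu'\in\OO_F^\times$ and $\mu'\equiv 1\pmod 4$, whence $\Norm_{F/\Q}(\mu')=1$; but $\mu'\equiv -1\pmod\fp$ together with $n$ odd gives $\Norm_{F/\Q}(\mu')\equiv -1\pmod p$, a contradiction. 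Hence every $S$-unit solution has $m_{\lambda,\mu}=1$, exactly matching the three trivial solutions and verifying \eqref{eqn:conds}. In particular the Galois assumption you lean on is superfluous; only the numerical condition $\gcd(n,p-1)=1$ is used.
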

These theorems and others
give asymptotic FLT for
families of number fields subject to restrictions
on the class group,
or on the Galois group.
The purpose of this paper is to establish the following three theorems.
\begin{theorem}\label{thm:pram}
Let $F$ be a totally real number field of degree $n$,
and let $p \ge 5$ be a prime.
Suppose
\begin{enumerate}[label=(\alph*)] 
\item $\gcd(n,p-1)=1$;
\item $2$ is either inert or totally ramifies in $F$;
\item $p$ totally ramifies in $F$.
\end{enumerate}
Then the asymptotic Fermat's Last Theorem holds over $F$.
\end{theorem}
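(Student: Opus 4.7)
The plan is to follow the modularity/level-lowering strategy of the earlier work cited in the introduction to reduce the existence of non-trivial Fermat solutions over $F$ to the existence of a solution of a unit equation, and then to apply a new local criterion at the prime $p$ to rule the unit equation out.

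\textbf{Reduction to a unit equation.} Suppose for contradiction that $a^\ell + b^\ell + c^\ell = 0$ has nontrivial solutions in $\OO_F^3$ for arbitrarily large primes $\ell$. Attach the Frey curve $E=E_{a,b,c}\colon Y^2=X(X-a^\ell)(X+b^\ell)$. For $\ell$ large, $\bar\rho_{E,\ell}$ is modular (by modularity of elliptic curves over totally real fields) and irreducible (the irreducibility argument uses hypothesis (b) on the behavior of $2$, exactly as in the earlier theorems quoted above). By level-lowering, $\bar\rho_{E,\ell}$ arises from a Hilbert newform of parallel weight $2$, trivial character, and level supported at $S := \{\fq : \fq \mid 2\}$. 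The standard rationality-of-eigenvalues argument promotes this newform to an elliptic curve $E'/F$ with good reduction outside $S$ and full $F$-rational $2$-torsion (inherited from $E[2]$). By hypothesis (b), $S = \{\fq\}$ is a single prime; writing $E'$ in Legendre form produces $\lambda \in F$ with
\[
\lambda,\ 1-\lambda \;\in\; \OO_{F,\{\fq\}}^\times.
\]

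\textbf{Local criterion at $p$.} Let $\fp$ denote the unique prime of $F$ above $p$, totally ramified by (c), with residue field $\F_p$. Since $\fp \nmid 2$, both $\lambda$ and $1-\lambda$ are units at $\fp$, and their reductions $\bar\lambda,\ \overline{1-\lambda}\in\F_p^\times$ satisfy $\bar\lambda+\overline{1-\lambda}=1$. The norm $N_{F/\Q}(\lambda)\in\Z$ is a $\{2\}$-unit, hence $\pm 2^k$; total ramification at $\fp$ gives $\overline{N_{F/\Q}(\lambda)}\equiv \bar\lambda^{\,n}\pmod p$, so $\bar\lambda^n$ and $(1-\bar\lambda)^n$ both lie in the subgroup $G:=\langle -1, 2 \rangle\le\F_p^\times$. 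Hypothesis (a) makes $x\mapsto x^n$ a bijection on $\F_p^\times$, hence on every subgroup, so $\bar\lambda,\ 1-\bar\lambda\in G$ themselves. Closing the argument requires ruling out every candidate pair $(\bar\lambda,\ 1-\bar\lambda)\in G\times G$ whose entries sum to $1$, by combining the congruence data with the global information carried by $\lambda\in F$: in particular with the $\fq$-adic valuation of $\lambda$ and the corresponding constraint on the norm exponent~$k$, and where necessary with finer information about the local units at $\fp$.

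\textbf{Main obstacle.} The crux is this final step of the local criterion. The $\F_p$-congruence information alone admits candidate reductions such as $(\bar\lambda,\ 1-\bar\lambda)=(2,-1)$, so the argument cannot terminate at the residue-field level; one must combine the reduction-mod-$\fp$ data with the global structure of $\lambda\in F$ to eliminate the remaining candidates. This is the ``local criterion for the unit equation'' promised by the paper's title, and it is the essential new input beyond the modularity machinery already developed in the precursor papers. The rest of the proof is a careful but essentially bookkeeping-type assembly of the Frey-curve, modularity, and level-lowering inputs, together with the local analysis above.
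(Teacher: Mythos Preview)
Your reduction step misstates what the modularity machinery actually delivers. Writing $S=\{\fq\}$, the $S$-unit equation $\lambda + \mu = 1$ with $\lambda, \mu \in \OO_S^\times$ \emph{always} has the solutions $(2,-1)$, $(-1,2)$, $(1/2,1/2)$, so a plan to ``rule the unit equation out'' cannot succeed; the candidate $(\bar\lambda,1-\bar\lambda)=(2,-1)$ you flag as an obstacle is not an artefact of the residue-field argument but the reduction of a genuine global solution. What the black box from \cite{FS1} (recorded here as Theorem~\ref{thm:FS}) actually says is that asymptotic FLT holds over $F$ provided every $S$-unit solution satisfies $m_{\lambda,\mu} \le 4\ord_\fq(2)$ in the totally ramified case, or the conditions \eqref{eqn:conds} in the inert case. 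The task is therefore not to eliminate all $S$-unit solutions, but only those with \emph{large} $\fq$-valuation.

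Once the target is corrected, your $p$-adic ingredient does close the argument, and the ``finer information about local units at $\fp$'' you anticipate is unnecessary. After normalizing (Lemma~\ref{lem:simplify}) so that $\lambda' \in \OO_F \cap \OO_S^\times$ and $\mu' = 1-\lambda' \in \OO_F^\times$, assume for contradiction that $\ord_\fq(\lambda') \ge 2\ord_\fq(2)$. Then $\mu' \equiv 1 \pmod 4$, which pins down $\Norm_{F/\Q}(\mu') = +1$. Your own computation, applied to the genuine unit $\mu'$ (norm $\pm 1$, not $\pm 2^k$), gives $\bar\mu' \in \{\pm 1\}$ in $\F_p$; the case $\mu' \equiv 1 \pmod \fp$ forces $\lambda' \equiv 0 \pmod \fp$, impossible, while $\mu' \equiv -1 \pmod \fp$ gives $\Norm_{F/\Q}(\mu') \equiv (-1)^n = -1 \pmod p$ since $n$ is odd by (a), contradicting $\Norm_{F/\Q}(\mu') = +1$. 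Hence every $S$-unit solution has $m_{\lambda,\mu} < 2\ord_\fq(2)$, which already settles the totally ramified case; when $2$ is inert one then invokes Theorem~\ref{thm:unitcrit} to exclude $m_{\lambda,\mu}=0$ and reads off \eqref{eqn:conds} from $m_{\lambda,\mu}=1$. The missing idea was not a sharper analysis at $\fp$ but the mod-$4$ information at $\fq$, available precisely in the large-valuation regime one must exclude.
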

\begin{theorem}\label{thm:23}
Let $F$ be a totally real number field of degree $n$.
Suppose
\begin{enumerate}[label=(\alph*)] 
\item $n\equiv 1$ or $5 \pmod{6}$;
\item $2$ is inert in $F$;
\item $3$ totally splits in $F$.
\end{enumerate}
Then the asymptotic Fermat's Last Theorem holds over $F$.
\end{theorem}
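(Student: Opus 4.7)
The plan is to apply the standard Freitas--Siksek framework reducing asymptotic FLT over totally real fields of odd degree to a statement about the $S$-unit equation. Given a putative nontrivial solution $(a,b,c)\in \OO_F^3$ to $a^\ell+b^\ell+c^\ell=0$ for a large prime $\ell$, I would attach the Frey curve $E=E_{a,b,c}/F$ and use modularity of elliptic curves over totally real fields together with level-lowering. Hypothesis (b) (combined with the known shape of the conductor of the Frey curve) forces the level of the resulting Hilbert newform of parallel weight $2$ to divide $\fq$, the unique prime of $F$ above $2$. After the customary step excluding newforms with irrational Hecke eigenvalue fields, one is reduced to the existence of an elliptic curve $E'/F$ with full $2$-torsion and good reduction outside $\fq$; equivalently, to a solution $(\lambda,\mu)$ of $\lambda+\mu=1$ with $\lambda,\mu\in \OO_F[1/\fq]^\times$.

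The main new content is to rule out such $(\lambda,\mu)$ using hypothesis (c). For each of the $n$ primes $\fp_i$ of $F$ above $3$, the curve $E'$ has good reduction at $\fp_i$ and retains its full $2$-torsion over the residue field $\OO_F/\fp_i\cong \F_3$; thus $4\mid \#E'(\F_3)$. The Hasse inequality $|\#E'(\F_3)-4|\le 2\sqrt 3 <4$ then forces $\#E'(\F_3)=4$, so $E'$ is supersingular at every $\fp_i$. A direct inspection of the (few) elliptic curves with full $2$-torsion over $\F_3$ shows the $\lambda$-invariant of $E'$ reduces to $-1$ modulo each $\fp_i$; by the Chinese Remainder Theorem, $\lambda\equiv -1\pmod{3\OO_F}$, and symmetrically $\mu\equiv -1\pmod{3\OO_F}$.

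I would then combine this rigid mod-$3$ information with the $\fq$-adic structure. Since $\lambda+\mu=1$ with both $\{\fq\}$-units, comparing $\fq$-valuations shows at least one of $\lambda,\mu$ lies in $\OO_{F,\fq}^\times$; its reduction lands in $(\OO_F/\fq)^\times=\F_{2^n}^\times$. Hypothesis (a) feeds in through two observations: $n$ odd makes $|\F_{2^n}^\times|=2^n-1$ coprime to $3$, while $3\nmid n$ controls the cubic part of the quotient $\OO_F[1/\fq]^\times/(\OO_F[1/\fq]^\times)^3$, preventing spurious solutions coming from cube-root ambiguities. Feeding these through together with $\lambda\equiv -1\pmod 3$, one should force $\lambda$ into the trivial orbit $\{-1,2,1/2\}$, which corresponds only to trivial Fermat solutions, contradicting $abc\neq 0$.

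The hardest step will be this last one: the congruence $\lambda\equiv -1\pmod{3\OO_F}$ alone does not close the argument, since many $\{\fq\}$-units satisfy it; what is needed is an interaction between the global $\{\fq\}$-unit structure, the reduction modulo $\fq$, and the degree hypothesis $\gcd(n,6)=1$ to collapse the solution set to the trivial one. This rigidity step is precisely the local criterion for the $\fq$-unit equation that the paper is built to supply, and replaces the class-number hypothesis used in earlier work.
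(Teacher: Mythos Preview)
Your reduction to the $S$-unit equation via the Freitas--Siksek machinery is correct, and your derivation of $\lambda\equiv\mu\equiv -1\pmod{3\OO_F}$ is fine (though the paper obtains it by the one-line observation that the only element of $\F_3^\times$ whose partner under $\lambda+\mu=1$ is also a unit is $-1$; the elliptic-curve detour through Hasse's bound is unnecessary).

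The genuine gap is your closing step. You aim to force $\lambda$ into the trivial orbit $\{-1,2,1/2\}$, and you propose to do this via ``$3\nmid 2^n-1$'' and a vague statement about the cubic part of $\OO_S^\times/(\OO_S^\times)^3$. Neither of these ingredients does what you want: the fact that $\F_{2^n}^\times$ has order prime to $3$ says nothing about which residues the $S$-units hit, and there is no reason the $S$-unit solution set should collapse to three elements under your hypotheses. You yourself flag this as ``the hardest step'' and defer it to ``the local criterion the paper is built to supply,'' which is an admission that the argument is incomplete.

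The paper does something both simpler and weaker. It does \emph{not} classify the $S$-unit solutions; it only shows that every solution has $m_{\lambda,\mu}=\max\{|\ord_\fq(\lambda)|,|\ord_\fq(\mu)|\}=1$, which already suffices for the valuation conditions in the Freitas--Siksek criterion. Triantafillou's theorem rules out $m_{\lambda,\mu}=0$. If $m_{\lambda,\mu}\ge 2$, then after the standard normalization one has $\lambda'\in\OO_F$ with $\ord_\fq(\lambda')\ge 2$ and $\mu'=1-\lambda'\in\OO_F^\times$. Now comes the step you are missing: from $\mu'\equiv 1\pmod{4}$ one gets $\Norm_{F/\Q}(\mu')\equiv 1\pmod 4$, hence $\Norm_{F/\Q}(\mu')=1$; but from $\mu'\equiv -1\pmod{3}$ and $n$ odd one gets $\Norm_{F/\Q}(\mu')\equiv -1\pmod 3$, hence $\Norm_{F/\Q}(\mu')=-1$. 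This norm contradiction is the entire content of hypothesis~(a): only the parity of $n$ is used here, and $3\nmid n$ is consumed solely by Triantafillou's theorem. No cubic-structure argument is needed or available.
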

\begin{theorem}\label{thm:23ram}
Let $F$ be a totally real number field of degree $n$.
Suppose
\begin{enumerate}[label=(\alph*)] 
\item $n$ is odd;
\item $2$ totally ramifies in  $F$;
\item $3$ totally splits in $F$.
\end{enumerate}
Then the asymptotic Fermat's Last Theorem holds over $F$.
\end{theorem}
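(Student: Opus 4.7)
The plan is to apply the modular method developed for asymptotic Fermat over totally real fields, following the blueprint of \cite{FKS, FKS2}, and then rule out the resulting $S$-unit equation via a purely local argument using (a), (b), (c). Starting from a hypothetical nontrivial solution $(a,b,c)\in\OO_F^3$ to $a^\ell+b^\ell+c^\ell=0$ for a sufficiently large prime $\ell$, attach the Frey curve $E_{a,b,c}\colon Y^2=X(X-a^\ell)(X+b^\ell)$. Modularity of elliptic curves over totally real fields together with Fujiwara--Jarvis level lowering attach to $\bar\rho_{E,\ell}$ a Hilbert modular eigenform of parallel weight $2$ and level supported only at the unique prime $\fp$ above $2$ (by (b)). Under the odd-degree hypothesis (a), non-elliptic eigenforms are ruled out by standard arguments, so the eigenform corresponds to an elliptic curve $E'/F$ with full $2$-torsion, good reduction outside $\fp$, and $\bar\rho_{E',\ell}\simeq\bar\rho_{E,\ell}$. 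The $2$-torsion of $E'$ produces a solution
\[
\lambda+\mu=1,\qquad \lambda,\mu\in\OO_{F,S}^*,\qquad S=\{\fp\}.
\]

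The $\fp$-adic analysis uses (b). The equation $\lambda+\mu=1$ admits the anharmonic $S_3$-action, and each orbit has a representative with $v_\fp(\lambda)=v_\fp(\mu)$. Since the residue field at $\fp$ is $\F_2$, if $v_\fp(\lambda)=v_\fp(\mu)=0$ then $\bar\lambda=\bar\mu=1\in\F_2$ and $\lambda+\mu\equiv 0\not\equiv 1\pmod\fp$ is an immediate contradiction. Otherwise $v_\fp(\lambda)=v_\fp(\mu)=-s$ for some $s\ge 1$; choosing $\pi\in F^*$ with $(\pi)=\fp^s$ and setting $u=\lambda\pi$, $v=\mu\pi$ yields $u,v\in\OO_F^*$ satisfying $u+v=\pi$.

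The $3$-adic analysis uses (c). Writing $3\OO_F=\fq_1\cdots\fq_n$, each residue field is $\F_3$, and $u,v,\pi$ are all $\fq_i$-units. The equation $\bar u+\bar v=\bar\pi$ in $\F_3$ with values in $\F_3^*=\{\pm 1\}$ forces $\bar u=\bar v=-\bar\pi$ at each $\fq_i$. Passing to norms, $|N_{F/\Q}(u)|=1$ and $|N_{F/\Q}(\pi)|=2^s$, and using the isomorphism $\OO_F/3\OO_F\simeq\F_3^n$ together with the product of the $n$ congruences gives
\[
N_{F/\Q}(\pi)\equiv(-1)^n N_{F/\Q}(u)\pmod 3.
\]
Since $n$ is odd by (a) and $2\equiv-1\pmod 3$, this forces a parity constraint on $s$. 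Combining it with the filtration of $1$-units in $\OO_{F,\fp}^*$ and the relation $u\equiv v\equiv 1\pmod\fp$ is intended to exhaust all possibilities and yield the final contradiction.

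The main obstacle is executing this last combination: both residue fields $\F_2$ and $\F_3$ supply only sign-type information, so extracting a contradiction requires carefully threading the odd-degree parity $(-1)^n=-1$ through the $\fp$-adic and $3$-adic data, likely via a Hensel-type argument on the relation $u+v=\pi$. A related subtlety is handling the "degenerate" $S_3$-orbit $\{(1/2,1/2),(-1,2),(2,-1)\}$ of the $S$-unit equation, which corresponds to the CM elliptic curve of $j$-invariant $1728$: this orbit is a genuine solution over any $F$ satisfying (b), and so must be excluded not from the $S$-unit side but via large-image results for $\bar\rho_{E_{a,b,c},\ell}$, valid at sufficiently large $\ell$ because $\Q(i)\not\subset F$.
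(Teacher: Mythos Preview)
Your overall architecture---Frey curve, modularity, level lowering, reduction to an $S$-unit equation with $S=\{\fq\}$---matches the paper's, since this is precisely the content of the black box Theorem~\ref{thm:FS}. The genuine gap is in your $S$-unit analysis, and you yourself flag it: after reaching $u+v=\pi$ with $u,v\in\OO_F^\times$ and $(\pi)=\fq^s$, your $3$-adic congruence $N_{F/\Q}(\pi)\equiv(-1)^n N_{F/\Q}(u)\pmod 3$ only yields a parity condition on $s$ against an unknown sign, and the hoped-for ``Hensel-type'' finish is not there. The trouble is your normalization: by passing to $u+v=\pi$ you have thrown away the relation to $1$, and with it the mod-$4$ leverage.

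The paper's fix is twofold. First, Theorem~\ref{thm:FS} does \emph{not} require eliminating all $S$-unit solutions; in the totally ramified case it only asks that every solution satisfy $m_{\lambda,\mu}\le 4\ord_\fq(2)$. So the ``degenerate'' orbit $\{(1/2,1/2),(-1,2),(2,-1)\}$ is a non-issue and needs no large-image argument. Second, to get the bound (in fact $m_{\lambda,\mu}<2\ord_\fq(2)$), keep the relation $\lambda'+\mu'=1$: normalize via Lemma~\ref{lem:simplify} so that $\lambda'\in\OO_F\cap\OO_S^\times$, $\mu'\in\OO_F^\times$, and $\ord_\fq(\lambda')=m_{\lambda,\mu}$. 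If $m_{\lambda,\mu}\ge 2\ord_\fq(2)$ then $\mu'=1-\lambda'\equiv 1\pmod 4$, which pins down $\Norm_{F/\Q}(\mu')=1$ with no sign ambiguity. On the other hand, your $3$-adic step (Lemma~\ref{lem:3adic}) applied to the \emph{integral} pair $(\lambda',\mu')$ gives $\mu'\equiv -1\pmod 3$, whence $\Norm_{F/\Q}(\mu')\equiv(-1)^n\equiv -1\pmod 3$ and $\Norm_{F/\Q}(\mu')=-1$. That is the contradiction. The moral: normalize so as to retain ``$=1$'' on the right, and use the threshold $2\ord_\fq(2)$ to extract mod-$4$ information on the genuine unit $\mu'$.
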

As far as we are aware, Theorems~\ref{thm:pram}, \ref{thm:23} and \ref{thm:23ram}
are the first in the literature giving sufficient criteria
for asymptotic FLT where  the criteria
(apart from restrictions
on the degree) are purely local.

\bigskip

Denote
the ring of integers of $F$ by $\OO_F$,
and the unit group of $\OO_F$ by $\OO_F^\times$.
Associated to $F$ is its
unit equation,
\begin{equation}\label{eqn:unit}
 \lambda + \mu = 1, \quad \lambda, \; \mu \in \OO_F^\times.
\end{equation}
A key step in the proofs of Theorems~\ref{thm:pram}
and \ref{thm:23} is to rule out the existence
of solutions to the unit equation.
For the fields appearing in the statement of Theorem~\ref{thm:pram}
this is furnished by the following theorem.
\begin{theorem}\label{thm:unitcrit}
Let $F$ be a number field of degree $n$, and let $p \ge 5$ be
a prime. Suppose
\begin{enumerate}[label=(\roman*)] 
\item $\gcd(n,(p-1)/2)=1$;
\item $p$ is totally ramified in $F$.
\end{enumerate}
Then the unit equation \eqref{eqn:unit} has no solutions.
\end{theorem}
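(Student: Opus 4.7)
The plan is to exploit total ramification of $p$ to force the residues of any units involved to live in a very small subgroup of $\mathbb{F}_p^\times$. Let $\fp$ be the unique prime of $F$ above $p$; total ramification gives $\OO_F/\fp \cong \F_p$ and $p\OO_F = \fp^n$. Reducing a putative solution $\lambda + \mu = 1$ modulo $\fp$ yields $\bar\lambda + \bar\mu = 1$ in $\F_p$, with $\bar\lambda, \bar\mu \in \F_p^\times$.

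The key ingredient is the congruence
\[
N_{F/\Q}(\epsilon) \equiv \bar\epsilon^{\,n} \pmod p \qquad \text{for every } \epsilon \in \OO_F^\times.
\]
I would prove this by interpreting $N_{F/\Q}(\epsilon)$ as the determinant of multiplication by $\epsilon$ on the $\Z$-module $\OO_F$, reducing modulo $p$, and using the filtration
\[
\OO_F/p\OO_F \;\supset\; \fp/p\OO_F \;\supset\; \cdots \;\supset\; \fp^{n-1}/p\OO_F \;\supset\; 0,
\]
whose successive quotients are each one-dimensional $\F_p$-vector spaces on which multiplication by $\epsilon$ acts as scalar multiplication by $\bar\epsilon$. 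Since $N_{F/\Q}(\epsilon) = \pm 1$, the congruence then gives $\bar\epsilon^{\,2n} = 1$ in $\F_p^\times$.

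From the hypothesis $\gcd(n,(p-1)/2)=1$ one computes $\gcd(2n,p-1)=2$, so the relation $\bar\epsilon^{\,2n}=1$ in the cyclic group $\F_p^\times$ forces $\bar\epsilon \in \{\pm 1\}$. Applying this to both $\lambda$ and $\mu$ reduces the problem to finding $\bar\lambda,\bar\mu \in \{\pm 1\}$ with $\bar\lambda + \bar\mu = 1$ in $\F_p$. The three possible sums $2$, $0$, $-2$ are all distinct from $1$ modulo $p$ whenever $p \ge 5$, giving the required contradiction. There is no real obstacle: the norm computation is the only substantive step, and the combinatorics of $\F_p^\times$ does the rest. (The argument genuinely breaks down for $p=3$, where $-1-1 \equiv 1 \pmod 3$, which explains the hypothesis $p \ge 5$.)
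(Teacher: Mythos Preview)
Your argument is correct and follows the same overall arc as the paper: both show that any unit reduces to $\pm 1$ modulo $\fp$ via the congruence $\Norm_{F/\Q}(\epsilon)\equiv \bar\epsilon^{\,n}\pmod p$, and then derive a contradiction from $\bar\lambda+\bar\mu=1$ in $\F_p$ with $p\ge 5$. The difference lies only in how that norm congruence is established. The paper passes to the normal closure $L$ of $F/\Q$, notes that $\fp\OO_L$ is $\Gal(L/\Q)$-stable because $p$ is totally ramified, and concludes that every conjugate of $\epsilon$ is congruent to $\bar\epsilon$ modulo a prime above $p$; hence the characteristic polynomial satisfies $C_{F,\epsilon}(X)\equiv (X-\bar\epsilon)^n\pmod p$, and the norm congruence drops out of the constant term. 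Your filtration argument on $\OO_F/p\OO_F$ is a purely local alternative that avoids Galois theory and the normal closure entirely, reading off the determinant from the graded pieces $\fp^i/\fp^{i+1}$. Both routes are short; yours is more self-contained, while the paper's yields the stronger statement about the full characteristic polynomial, which is not needed here but could be of independent use.
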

The following remarkable
theorem of Triantafillou \cite{Triantafillou} rules out solutions to
the unit equation for the fields appearing in the
statement of Theorem~\ref{thm:23}.
\begin{theorem}[Triantafillou]\label{thm:Triantafillou}
Let $F$ be a number field of degree $n$. Suppose
\begin{enumerate}[label=(\roman*)] 
        \item $3 \nmid n$;
        \item $3$ totally splits in $F$.
\end{enumerate}
Then the unit equation \eqref{eqn:unit} has no solutions.
\end{theorem}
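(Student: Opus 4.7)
My plan is to exploit the rigid constraints imposed at each prime above $3$ and then refine the resulting mod-$3$ congruences to mod-$9$ ones using a norm computation, yielding a numerical contradiction with $3 \nmid n$.

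First, since $3$ totally splits, $3\OO_F = \fp_1 \cdots \fp_n$ with $\OO_F/\fp_i \cong \F_3$ for each $i$. Among the four pairs in $(\F_3^\times)^2$, only $(-1,-1)$ sums to $1$, so reducing $\lambda + \mu = 1$ modulo $\fp_i$ forces $\lambda \equiv \mu \equiv -1 \pmod{\fp_i}$ for every $i$; by the Chinese remainder theorem, $\lambda + 1, \mu + 1 \in 3\OO_F$, so I write $\lambda = -1 + 3x$ and $\mu = -1 + 3y$ with $x, y \in \OO_F$. Next, $N_{F/\Q}(\lambda) \in \{\pm 1\}$ reduces mod $3$ to $\prod_i \bar\lambda_{\fp_i} = (-1)^n$ (using total splitting to compute the norm locally), so $N(\lambda) = (-1)^n$, and similarly $N(\mu) = (-1)^n$.

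The key step is to expand the norm modulo $9$ using the embeddings $\sigma$ of $F$:
\[
(-1)^n N(\lambda) \;=\; N(-\lambda) \;=\; \prod_\sigma \bigl(1 - 3\sigma(x)\bigr) \;\equiv\; 1 - 3\Trace_{F/\Q}(x) \pmod 9,
\]
because all elementary symmetric functions of $\sigma_1(x), \ldots, \sigma_n(x)$ are integers (they appear, up to sign, as coefficients of the characteristic polynomial of $x$ acting on $\OO_F$), so every term involving $3^k$ with $k \ge 2$ vanishes mod $9$. Combined with $(-1)^n N(\lambda) = 1$, this gives $3\Trace(x) \equiv 0 \pmod 9$, and hence $\Trace(\lambda) \equiv -n \pmod 9$. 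By the same argument $\Trace(\mu) \equiv -n \pmod 9$. Adding and using $\Trace(\lambda) + \Trace(\mu) = \Trace(1) = n$ forces $n \equiv -2n \pmod 9$, i.e.\ $3n \equiv 0 \pmod 9$, which contradicts the hypothesis $3 \nmid n$.

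The main obstacle is noticing that the mod-$3$ reduction by itself is \emph{consistent} with $\lambda + \mu = 1$ (since $(-1) + (-1) \equiv 1 \pmod 3$), so no contradiction appears at that level and one is forced to lift to mod $9$. The trick is to introduce $x = (\lambda+1)/3 \in \OO_F$ and expand $N(-\lambda)$, relying on integrality of \emph{all} elementary symmetric functions of $\sigma(x)$ to discard every term beyond the linear one; once this is spotted, the symmetric roles of $\lambda$ and $\mu$ produce the contradiction essentially for free.
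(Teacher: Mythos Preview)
Your proof is correct and follows essentially the same route as the paper's: reduce modulo the primes above $3$ to get $\lambda,\mu\equiv -1\pmod 3$, expand the norm of $-1+3x$ modulo $9$ to force $\Trace(x)\equiv 0\pmod 3$, and then add the two trace conditions to contradict $3\nmid n$. The only cosmetic difference is that you pin down $N(\lambda)=(-1)^n$ in advance via the mod-$3$ norm computation, whereas the paper leaves $N(\lambda)=\pm 1$ and rules out the bad cases after the mod-$9$ expansion.
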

Note the similarities in the statements of
Theorems~\ref{thm:unitcrit} and \ref{thm:Triantafillou}:
assumptions (i) in both are restrictions on the
degree, and assumptions (ii) in both are purely local.
Despite the vast literature surrounding unit equations (see \cite{EvertseGyory}
for an extensive survey), the subject of local obstructions to solutions has
received little attention.

We do not expect a common generalization of Theorems~\ref{thm:unitcrit}
and~\ref{thm:Triantafillou}. For example, if we let $K=\Q(\sqrt{-3})$ then the unit equation
has the solution $(\lambda,\mu)=((1+\sqrt{-3})/2,(1-\sqrt{-3})/2)$, showing that Theorem~\ref{thm:unitcrit} is false for $p=3$, and that Theorem~\ref{thm:Triantafillou} is no longer valid if we allow $3$ to ramify instead of splitting.
Another interesting example, given in \cite{FKS2}, is furnished by the number field $K=\Q(\theta)$ with $\theta^3-6\theta^2+9\theta-3$. 
Here $3$ is totally ramified and the unit equation has $18$ solutions including $(\lambda,\mu)=(2-\theta,-1+\theta)$.
As Triantafillou \cite[Remark 3]{Triantafillou} points out, Theorem~\ref{thm:Triantafillou} no longer holds if $3$ is replaced by $5$.

\medskip

We now come to the other main ingredient needed for the proofs
of Theorems~\ref{thm:pram}, \ref{thm:23} and \ref{thm:23ram}.
Suppose $2$ is either inert or totally ramified in $F$,
and write $\fq$ for the unique prime ideal above $2$,
and let $S=\{\fq\}$. We write $\OO_S$ for the ring of $S$-integers
of $F$, and $\OO_S^\times$ for the group of $S$-units.
We consider the $S$-unit equation
\begin{equation}\label{eqn:sunit}
\lambda+\mu=1, \qquad \lambda,~\mu \in \OO_S^\times,
\end{equation}
The following is a special case of
\cite[Theorem 3]{FS1}, which gives a criterion for asymptotic FLT
in terms of the solutions to \eqref{eqn:sunit}.
\begin{theorem}\label{thm:FS}
Let $F$ be a totally real number field.
Assume that $2$ is inert or totally ramified in $F$, write $\fq$ for the unique prime ideal above $2$, and let $S=\{\fq\}$.
If $2$ is totally ramified in $F$, suppose that
        every solution $(\lambda,\mu)$
        to \eqref{eqn:sunit} satisfies
\begin{equation}\label{eqn:condsRam}
\max\{ \lvert \ord_\fq(\lambda) \rvert,~ \lvert \ord_\fq(\mu) \rvert\} \le 4 \ord_\fq(2).
\end{equation}
If $2$ is inert in $F$, suppose that $F$ has odd degree and that
every solution $(\lambda,\mu)$
to \eqref{eqn:sunit} satisfies
\begin{equation}\label{eqn:conds}
\max\{ \lvert \ord_\fq(\lambda) \rvert,~ \lvert \ord_\fq(\mu) \rvert\} \le 4, \qquad
\ord_{\fq}(\lambda \mu)  \equiv 1 \pmod{3}.
\end{equation}
Then the asymptotic Fermat's Last Theorem holds over $F$.
\end{theorem}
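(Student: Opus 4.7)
The plan is to apply the modular method for Fermat-type equations over totally real fields, combining Frey curves with modularity and Hilbert-modular level lowering. Suppose for contradiction that asymptotic FLT fails over $F$, so that for arbitrarily large primes $\ell$ there exist non-trivial $(a,b,c)\in\OO_F^3$ with $a^\ell+b^\ell+c^\ell=0$. A rescaling-and-permutation normalisation, exploiting that $\fq$ is the unique prime above $2$ (and, in the inert case, the odd-degree hypothesis, to perform a parity trick), yields a triple with $a,b,c$ pairwise coprime away from $\fq$; one then attaches the Frey curve
\[
E=E_{a,b,c}\colon Y^2 = X(X-a^\ell)(X+b^\ell),
\]
which has full $F$-rational $2$-torsion, multiplicative reduction at every prime $\fp\nmid\fq$ dividing $abc$, and conductor at $\fq$ bounded by $2+6\ord_\fq(2)$ via Serre's bound.

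Next I would invoke the Freitas--Le Hung--Siksek modularity theorem to attach to $E$ a Hilbert modular eigenform of parallel weight $2$; verify absolute irreducibility of $\bar\rho_{E,\ell}$ for $\ell$ large (standard via isogeny bounds over number fields); and apply Hilbert-modular level lowering (Jarvis, Fujiwara, Rajaei) to strip all primes outside $\fq$ from the level, producing a Hilbert newform $\mathfrak{f}$ of level dividing $\fq^M$ with $M$ a constant depending only on $\ord_\fq(2)$. Finiteness of the relevant newform space, plus pigeonhole in $\ell$, yields a single $\mathfrak{f}$ arising for infinitely many $\ell$; rationality of its Hecke eigenvalues follows by matching Frobenius traces of $E$ modulo increasingly large $\ell$. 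Excluding the ``fake'' $\GL_2$-type alternative --- a delicate but standard step in which the totally real and degree hypotheses play a role --- an Eichler--Shimura argument produces an elliptic curve $E'/F$ of conductor dividing $\fq^M$; a further analysis of the level at $\fq$ combined with the congruence $\bar\rho_{E,\ell}\simeq\bar\rho_{E',\ell}$ (and the fact that $E[2]\subseteq E(F)$) forces $E'[2]\subseteq E'(F)$, possibly after replacing $E'$ by a suitable curve in its $\ell$-isogeny class.

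Finally I would extract the contradiction from local analysis. For any prime $\fp\nmid\fq$ with $\fp\mid abc$, $E$ has multiplicative reduction while $E'$ has good reduction, so the congruence of mod-$\ell$ representations forces $a_\fp(E')\equiv\pm(N(\fp)+1)\pmod\ell$; combined with Hasse's bound $|a_\fp(E')|\le 2\sqrt{N(\fp)}$, this gives $\ell<(1+\sqrt{N(\fp)})^2$, impossible for $\ell$ large. Hence $a,b,c\in\OO_S^\times$, and $(\lambda,\mu):=(-a^\ell/c^\ell,-b^\ell/c^\ell)$ is a solution of \eqref{eqn:sunit} with $\ord_\fq(\lambda)=\ell\,(\ord_\fq(a)-\ord_\fq(c))$. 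If $\ord_\fq(a),\ord_\fq(b),\ord_\fq(c)$ are not all equal, then $|\ord_\fq(\lambda)|$ or $|\ord_\fq(\mu)|$ is at least $\ell$, violating the size bound in \eqref{eqn:condsRam} or \eqref{eqn:conds} for $\ell$ large. Otherwise, after a further rescaling, $a,b,c\in\OO_F^\times$: in the ramified case, reduction modulo $\fq$ (residue field $\F_2$) gives $a^\ell+b^\ell+c^\ell\equiv 1+1+1=1\not\equiv 0\pmod\fq$, an immediate contradiction; in the inert case, the produced $(\lambda,\mu)$ has $\ord_\fq(\lambda\mu)=0\not\equiv 1\pmod 3$, contradicting the mod-$3$ clause of \eqref{eqn:conds}. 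I expect the main obstacle to be the exclusion of the fake $\GL_2$-type case in the Eichler--Shimura step, together with the propagation of $F$-rational $2$-torsion from $E$ to $E'$: both require combining detailed information about $\bar\rho_{E,\ell}$ at $\fq$ with the totally real and degree hypotheses, and this is where the most subtle input from the modular-form machinery intervenes.
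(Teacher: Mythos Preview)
First, note that the paper does not itself prove this theorem: it is quoted as a special case of \cite[Theorem~3]{FS1}, with only the remark that the proof there follows the Frey--Serre--Ribet--Wiles strategy and relies on modularity lifting theorems and Merel's uniform boundedness theorem. Your outline of the modular method (Frey curve, modularity, irreducibility of $\bar\rho_{E,\ell}$, level lowering to a Hilbert newform of level supported at $\fq$, passage to an elliptic curve $E'/F$ with full $2$-torsion and conductor a power of $\fq$) is in line with that strategy.

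The genuine gap is in your endgame. You claim that for any $\fp\nmid\fq$ with $\fp\mid abc$ the congruence $a_\fp(E')\equiv\pm(N(\fp)+1)\pmod\ell$ combined with the Hasse bound gives $\ell<(1+\sqrt{N(\fp)})^2$, ``impossible for $\ell$ large''. But $N(\fp)$ is not bounded a priori: the prime $\fp$ depends on the putative solution $(a,b,c)$, which in turn depends on $\ell$, so this inequality yields no contradiction and you cannot conclude that $a,b,c\in\OO_S^\times$. In general the Fermat triple need not consist of $S$-units, and the $S$-unit solution to which hypotheses \eqref{eqn:condsRam} and \eqref{eqn:conds} are applied in \cite{FS1} is \emph{not} $(-a^\ell/c^\ell,-b^\ell/c^\ell)$. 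It comes instead from the auxiliary curve $E'$: since $E'$ has full rational $2$-torsion and good reduction away from $\fq$, a Legendre model $Y^2=X(X-1)(X-\lambda)$ produces $(\lambda,1-\lambda)$ with both entries in $\OO_S^\times$. The hypotheses \eqref{eqn:condsRam}, \eqref{eqn:conds} then control $\ord_\fq(j(E'))$, and the contradiction for large $\ell$ is obtained by comparing the image of inertia at $\fq$ in $\bar\rho_{E',\ell}$ with that in $\bar\rho_{E,\ell}$ (the latter being governed by the normalisation of $(a,b,c)$ at $\fq$), rather than by a Hasse-bound argument at primes dividing $abc$.
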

The proof \cite{FS1} of this theorem
exploits the strategy of Frey, Serre, Ribet, Wiles and
Taylor, utilized in Wiles' proof \cite{Wiles}
of Fermat's Last Theorem,  and builds on many
deep results
including Merel's uniform boundedness theorem,
and modularity lifting theorems due to
Barnett-Lamb, Breuil, Diamond, Gee, Geraghty, Kisin, Skinner,
Taylor, Wiles, and others.

\medskip

The paper is organized as follows. In Section~\ref{sec:unitcrit}
we prove Theorem~\ref{thm:unitcrit}. In Section~\ref{sec:simplify}
we introduce a lemma that allows us to replace an
arbitrary solution
to the $S$-unit equation \eqref{eqn:sunit} with an integral
solution. In Section~\ref{sec:pram}
we prove Theorem~\ref{thm:pram}. In Section~\ref{sec:Triantafillou}
we give a quick proof of Triantafillou's theorem
(Theorem~\ref{thm:Triantafillou}). This is partly for
the convenience of the reader, but also because
some of the details implicit in Triantafillou's paper
\cite{Triantafillou} are needed for the proofs of Theorems \ref{thm:23} and \ref{thm:23ram}.
In Section~\ref{sec:23} we give proofs of
Theorems~\ref{thm:23} and~\ref{thm:23ram}.
In Section~\ref{sec:Haluk} we give a conjectural
generalization of Theorems~\ref{thm:pram} and \ref{thm:23ram}
to number fields that are not totally real.

\medskip

We are grateful to Alex Bartel for useful discussions, and to the referees for suggesting several improvements.

\section{Proof of Theorem~\ref{thm:unitcrit}}\label{sec:unitcrit}
In this section $F$ is a number field of degree $n$,
and $p$ is a prime that totally ramifies in $F$.
We write $\fp$ for the unique prime of $\OO_F$ above $p$.
\begin{lemma}\label{lem:charpol}
Let $\lambda \in \OO_F$. Write $C_{F,\lambda}(X) \in \Z[X]$ for the characteristic
polynomial of $\lambda$. Then
\begin{equation}\label{eqn:chi}
C_{F,\lambda}(X) \equiv (X-b)^n \mod p \Z[X]
\end{equation}
where $b \in \Z$ satisfies $\lambda \equiv b \pmod{\fp}$.
\end{lemma}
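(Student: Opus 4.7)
Since $p$ is totally ramified, the residue field is $\OO_F/\fp \cong \F_p$, so every class in the residue field has a representative in $\{0,1,\dots,p-1\}$; in particular such a $b \in \Z$ with $\lambda \equiv b \pmod{\fp}$ exists. Set $x := \lambda - b \in \fp$; by the translation formula $C_{F,\lambda}(X) = C_{F,x}(X-b)$, it suffices to prove
\[
C_{F,x}(X) \equiv X^n \pmod{p\Z[X]}.
\]

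To do this I would work with an integral basis. Pick a $\Z$-basis of $\OO_F$ and let $M_x \in M_n(\Z)$ be the matrix of multiplication-by-$x$ in this basis, so that $C_{F,x}(X) = \det(XI - M_x)$. Reducing mod $p$, $\overline{M_x}$ is the matrix of multiplication-by-$\bar x$ on the $\F_p$-vector space $\OO_F/p\OO_F$ in the reduced basis, and
\[
C_{F,x}(X) \bmod p \;=\; \det(XI - \overline{M_x}).
\]
The point is then that $\overline{M_x}$ is nilpotent. Indeed, since $p$ totally ramifies we have $p\OO_F = \fp^n$, so $\OO_F/p\OO_F$ carries the filtration
\[
\OO_F/\fp^n \;\supset\; \fp/\fp^n \;\supset\; \fp^2/\fp^n \;\supset\; \cdots \;\supset\; \fp^n/\fp^n = 0.
\]
Because $x \in \fp$, multiplication by $x$ sends $\fp^k/\fp^n$ into $\fp^{k+1}/\fp^n$; iterating $n$ times shows $\overline{M_x}^n = 0$. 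A nilpotent matrix has characteristic polynomial $X^n$, which gives the desired congruence.

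I don't expect a genuine obstacle here; the only thing to be careful about is the bookkeeping that $C_{F,\lambda}(X) \bmod p$ really is the characteristic polynomial of the $\F_p$-linear endomorphism of $\OO_F/p\OO_F$ induced by $\lambda$ (this is why one must work with an integral basis rather than an arbitrary $\Q$-basis of $F$), and the translation identity $C_{F,\lambda}(X)=C_{F,x}(X-b)$ which follows immediately from the fact that the multiplication matrix of $\lambda = b + x$ is $bI+M_x$.
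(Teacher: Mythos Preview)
Your proof is correct and takes a genuinely different route from the paper's. The paper passes to the Galois closure $L$ of $F/\Q$: from $p\OO_F=\fp^n$ it deduces $p\OO_L=(\fp\OO_L)^n$, so by unique factorization $\fp\OO_L$ is stable under $\Gal(L/\Q)$; hence every conjugate $\lambda_i$ of $\lambda$ satisfies $\lambda_i\equiv b\pmod{\fp\OO_L}$, and the factored form $C_{F,\lambda}(X)=\prod_i(X-\lambda_i)$ reduces to $(X-b)^n$ modulo $\fp\OO_L$, hence modulo $p$. Your argument instead stays inside $\OO_F$ and is purely linear-algebraic: the $\fp$-adic filtration on $\OO_F/p\OO_F=\OO_F/\fp^n$ makes multiplication by $x=\lambda-b\in\fp$ nilpotent, forcing its characteristic polynomial over $\F_p$ to be $X^n$. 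Your route is arguably more elementary---no Galois closure, no factoring over a larger ring---while the paper's argument makes transparent the underlying reason (all conjugates of $\lambda$ share the same residue) and sits closer to the Dedekind--Kummer picture relating prime splitting to the mod~$p$ factorization of characteristic polynomials.
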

\begin{proof}
Note that $\OO_F/\fp \cong \Z/p\Z$. Hence there is some $b \in \Z$
such that $\lambda \equiv b \pmod{\fp}$.

Let $L$ be the normal closure of $F/\Q$.
Note that $p\OO_F=\fp^n$. Hence $p \OO_L=(\fp \OO_L)^n$.
Let $\sigma \in \Gal(L/\Q)$. Applying $\sigma$ to the previous equality gives
\[
        (\sigma(\fp \OO_L))^n=\sigma(p \OO_L)=p \OO_L=(\fp \OO_L)^n.
\]
        By unique factorization of ideals,  $\sigma(\fp \OO_L)=\fp \OO_L$. Applying $\sigma$ to
$\lambda \equiv b \pmod{\fp \OO_L}$ we find
        $\sigma(\lambda) \equiv b \pmod{\fp \OO_L}$.

Now let $\lambda_1,\dotsc,\lambda_n$ be the roots in $L$
of the characteristic polynomial $C_{F,\lambda}(X)$.
Since $C_{F,\lambda}$ is a power of
the minimal polynomial of $\lambda$, the $\lambda_i$ are all
conjugates of~$\lambda$.
Therefore $\lambda_i \equiv b \pmod{\fp \OO_L}$ for all~$i$.
Thus
\[
C_{F,\lambda}(X)=(X-\lambda_1)(X-\lambda_2)\cdots (X-\lambda_n)
\equiv (X-b)^n \mod \fp \OO_L[X].
\]
However $C_{F,\lambda}(X)$ and $(X-b)^n$ both belong to $\Z[X]$.
This gives \eqref{eqn:chi}.
\end{proof}

\begin{lemma}\label{lem:norm}
Let $\lambda \in \OO_F$, and let $b \in \Z$ satisfy
$\lambda \equiv b \pmod{\fp}$.
Then
\[
        \Norm_{F/\Q}(\lambda) \equiv b^n \pmod{p}.
\]
\end{lemma}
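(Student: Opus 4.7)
The plan is to deduce this directly from Lemma~\ref{lem:charpol}. Recall that for any $\lambda \in \OO_F$, the norm $\Norm_{F/\Q}(\lambda)$ equals $(-1)^n$ times the constant term of the characteristic polynomial $C_{F,\lambda}(X)$, since $C_{F,\lambda}$ is a monic polynomial of degree $n$ whose roots are the Galois conjugates of $\lambda$ (counted with the appropriate multiplicity), and the product of these roots is the norm.

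First I would apply Lemma~\ref{lem:charpol} to write $C_{F,\lambda}(X) \equiv (X-b)^n \pmod{p\Z[X]}$. Then I would evaluate both sides at $X=0$ to obtain the congruence $C_{F,\lambda}(0) \equiv (-b)^n \pmod{p}$ in $\Z$. Multiplying through by $(-1)^n$ and using $\Norm_{F/\Q}(\lambda) = (-1)^n C_{F,\lambda}(0)$ gives $\Norm_{F/\Q}(\lambda) \equiv b^n \pmod{p}$, which is exactly the desired statement.

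There is no real obstacle here, since the lemma is just the specialization of Lemma~\ref{lem:charpol} at $X=0$ combined with the standard identification of the norm with the constant term of the characteristic polynomial. The only tiny bookkeeping item is the sign $(-1)^n$, which cancels cleanly on both sides.
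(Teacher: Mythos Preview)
Your proof is correct and is exactly the paper's approach: the paper's proof is the one-line remark that the lemma follows ``on comparing constant coefficients in \eqref{eqn:chi}'', which is precisely your evaluation at $X=0$ together with the sign bookkeeping $\Norm_{F/\Q}(\lambda)=(-1)^n C_{F,\lambda}(0)$.
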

\begin{proof}
We obtain this immediately on comparing constant
        coefficients in \eqref{eqn:chi}.
\end{proof}

\begin{lemma}\label{lem:pm1}
Suppose $p$ is odd, and that $\gcd(n,(p-1)/2)=1$.
Let $\lambda \in \OO_F^\times$.
Then $\lambda \equiv \pm 1 \pmod{\fp}$.
\end{lemma}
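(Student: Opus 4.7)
The plan is to use Lemma \ref{lem:norm} together with the fact that units have norm $\pm 1$ to bound the multiplicative order of $\lambda \bmod \fp$.

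More precisely, since $p$ is totally ramified, $\OO_F/\fp \cong \F_p$, so there is some $b \in \Z$ with $\lambda \equiv b \pmod{\fp}$. As $\lambda$ is a unit, $\Norm_{F/\Q}(\lambda) = \pm 1$. First I would apply Lemma~\ref{lem:norm} to get
\[
 \pm 1 \equiv b^n \pmod{p},
\]
and hence $b^{2n} \equiv 1 \pmod{p}$. In particular $\gcd(b,p)=1$, so $b$ defines an element of the cyclic group $\F_p^\times$, whose order divides $\gcd(2n,p-1)$.

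Writing $p-1=2\cdot (p-1)/2$ and using the hypothesis $\gcd(n,(p-1)/2)=1$, we have
\[
\gcd(2n,p-1) \;=\; 2\cdot \gcd\!\left(n,\tfrac{p-1}{2}\right) \;=\; 2.
\]
Therefore the order of $b$ in $\F_p^\times$ divides $2$, and since $p$ is odd this forces $b \equiv \pm 1 \pmod{p}$, giving $\lambda \equiv \pm 1 \pmod{\fp}$, as required.

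There is no real obstacle here: once one has the characteristic polynomial / norm computation of Lemmas~\ref{lem:charpol} and~\ref{lem:norm}, the argument reduces to an elementary gcd calculation in the cyclic group $\F_p^\times$. The only small point to be careful about is the identity $\gcd(2n,2m)=2\gcd(n,m)$ used to eliminate the factor of $2$ coming from the ambiguity of sign in $\Norm_{F/\Q}(\lambda)=\pm 1$.
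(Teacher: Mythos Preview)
Your proof is correct and follows essentially the same route as the paper: both start from Lemma~\ref{lem:norm} to obtain $b^n\equiv\pm 1\pmod p$, and both finish with an elementary argument in $\F_p^\times$. The only cosmetic difference is that the paper writes out B\'ezout coefficients $un+v(p-1)/2=1$ and computes $b=(b^n)^u(b^{(p-1)/2})^v\equiv\pm 1$, whereas you phrase the same step as ``the order of $b$ divides $\gcd(2n,p-1)=2$''.
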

\begin{proof}
Our assumption $\gcd(n,(p-1)/2)=1$ is equivalent  to
the existence of integers
$u$, $v$ so that $un+v(p-1)/2=1$.
Let $b \in \Z$
satisfy $\lambda \equiv b \pmod{\fp}$. By Lemma~\ref{lem:norm}
and the fact that $\lambda$ is a unit,
$b^n \equiv \pm 1 \pmod{p}$.
However, $b^{(p-1)/2} \equiv \pm 1 \pmod{p}$.
It follows that
\[
        b\; \equiv \; 
        (b^n)^u \cdot \left(b^{(p-1)/2} \right)^v 
        \; \equiv \;
        \pm 1 \pmod{p}. 
\]
Therefore $\lambda \equiv \pm 1 \pmod{\fp}$.
\end{proof}

\begin{proof}[Proof of Theorem~\ref{thm:unitcrit}]
Let $F$ be a number field of degree $n$ and let $p \ge 5$
be a prime that totally ramifies in $F$, and such that
$\gcd(n,(p-1)/2)=1$.
        Let $(\lambda,\mu)$ be a solution to the unit
        equation \eqref{eqn:unit}.
        By Lemma~\ref{lem:pm1}, we see that $\lambda \equiv \pm 1 \pmod{\fp}$
        and $\mu \equiv \pm 1 \pmod{\fp}$. Thus $1=\lambda+\mu \equiv \pm 1 \pm 1 \pmod{\fp}$. As $p \ne 3$, this is impossible.
\end{proof}

\section{A simplifying lemma}\label{sec:simplify}
Let $F$ be a number field in which $2$ is either inert or totally ramified. We
write $\fq$ for the unique prime above $2$ and let $S=\{\fq\}$. To deduce Theorems \ref{thm:pram},
\ref{thm:23} and \ref{thm:23ram} from Theorem~\ref{thm:FS} we need strong control
of the solutions to the $S$-unit equation \eqref{eqn:sunit}.
The following lemma facilitates this  by allowing
us replace arbitrary solutions by
integral ones.
\begin{lemma}\label{lem:simplify}
Let $(\lambda,\mu)$ be a solution to the $S$-unit equation \eqref{eqn:sunit}, and write
\begin{equation}\label{eqn:nlmdef}
m_{\lambda,\mu}=\max\{ \lvert \ord_\fq(\lambda) \rvert,~ \lvert \ord_\fq(\mu) \rvert\}.
\end{equation}
Then there is a solution $(\lambda^\prime,\mu^\prime)$ to \eqref{eqn:sunit}
with
\[
\lambda^\prime \in \OO_F \cap \OO_S^\times, \qquad \mu^\prime \in \OO_F^\times, \qquad
m_{\lambda^\prime,\mu^\prime}=m_{\lambda,\mu}. 
\]
\end{lemma}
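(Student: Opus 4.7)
The plan is to exploit the natural $S_3$-symmetry of the $S$-unit equation: if $(\lambda,\mu)$ is a solution, then so are $(\mu,\lambda)$, $(1/\lambda,-\mu/\lambda)$, and $(1/\mu,-\lambda/\mu)$. Since $\lambda,\mu\in\OO_S^\times$ are units at every prime of $\OO_F$ other than $\fq$, the same is automatic for any rational expression in $\lambda,\mu$ that one extracts; hence the conditions $\lambda'\in\OO_F\cap\OO_S^\times$ and $\mu'\in\OO_F^\times$ reduce to arranging the two valuation conditions $\ord_\fq(\lambda')\ge 0$ and $\ord_\fq(\mu')=0$.

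The first step is to pin down the possibilities for $(a,b):=(\ord_\fq(\lambda),\ord_\fq(\mu))$. Applying $\ord_\fq$ to $\lambda+\mu=1$ and using $\ord_\fq(1)=0$, the ultrametric inequality forces the following dichotomy: if $a\neq b$ then $\min(a,b)=0$, so (up to swapping) $a=0$ and $b\ge 0$; if $a=b$ then $a=b\le 0$. Combining these, one of two cases occurs: (A) $a,b\ge 0$ with at least one of them equal to zero, or (B) $a=b<0$.

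In Case (A), one of the pairs $(\lambda,\mu)$ or $(\mu,\lambda)$ already has the required shape, and $m_{\lambda,\mu}$ is preserved trivially. In Case (B) I would take $(\lambda',\mu')=(1/\mu,-\lambda/\mu)$; this is a solution of \eqref{eqn:sunit} because $(1-\lambda)/\mu=\mu/\mu=1$, and the valuation computation gives $\ord_\fq(\lambda')=-b>0$ and $\ord_\fq(\mu')=a-b=0$, delivering the required integrality, with $m_{\lambda',\mu'}=\lvert b\rvert=m_{\lambda,\mu}$. There is no serious obstacle: once one records that the ultrametric property confines $(a,b)$ to these two simple patterns, the rest is a transparent choice of representative in the $S_3$-orbit.
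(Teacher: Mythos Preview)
Your proof is correct and follows essentially the same approach as the paper: a case split on the valuations $(\ord_\fq(\lambda),\ord_\fq(\mu))$ followed by picking the appropriate element of the $S_3$-orbit. The only cosmetic difference is that in the negative-valuation case the paper chooses $(\lambda',\mu')=(1/\lambda,-\mu/\lambda)$ rather than your $(1/\mu,-\lambda/\mu)$, which is immaterial since $a=b$ there.
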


\begin{proof}
Let $(\lambda,\mu)$ be a solution to \eqref{eqn:sunit}.
If $\ord_\fq(\lambda)=\ord_\fq(\mu)=0$ then
$\lambda$, $\mu \in \OO_F^\times$ and we take
$\lambda^\prime=\lambda$ and $\mu^\prime=\mu$.
If $\ord_\fq(\lambda)>0$ then the relation $\lambda+\mu=1$
forces $\ord_\fp(\mu)=0$.
In this case we have
$\lambda \in \OO_F$, $\mu \in \OO_F^\times$ and
we again take $\lambda^\prime=\lambda$ and $\mu^\prime=\mu$.
If $\ord_\fq(\mu)>0$ then
we take $\lambda^\prime=\mu$ and $\mu^\prime=\lambda$.
We have therefore reduced to the case where
$\ord_\fq(\lambda)<0$ and $\ord_\fq(\mu)<0$.
From the relation $\lambda+\mu=1$ we have
$\ord_\fq(\lambda)=\ord_\fq(\mu)=-t$
for some positive $t=m_{\lambda,\mu}$.
In this case the lemma follows on choosing
 $\lambda^\prime=1/\lambda$, $\mu^\prime=-\mu/\lambda$.
\end{proof}

\section{Proof of Theorem~\ref{thm:pram}}\label{sec:pram}
In this section we suppose that $F$ and $p$
satisfy the hypotheses of Theorem~\ref{thm:pram}.
Namely, $F$ is a totally real field of degree $n$
such that $2$ is either inert or totally ramifies in $F$,
and $p \ge 5$ is a prime totally ramified in $F$
and satisfying $\gcd(n,p-1)=1$.
As before we take
$\fq$ to be the unique prime above $2$ and $S=\{\fq\}$, and we write
$\fp$ for the unique prime above $p$.

\begin{lemma}\label{lem:valbound}
Every solution $(\lambda,\mu)$
to the $S$-unit equation \eqref{eqn:sunit} satisfies
\[
        m_{\lambda,\mu} \, < \, 2 \ord_\fq(2),
\]
where $m_{\lambda,\mu}$ is defined in \eqref{eqn:nlmdef}.
\end{lemma}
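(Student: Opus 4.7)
The plan is to combine the simplifying Lemma~\ref{lem:simplify} with Lemma~\ref{lem:pm1} applied at~$p$ and a short norm-modulo-$4$ computation. The prime $p$ forces $\mu$ to be $\equiv -1\pmod{\fp}$, from which one reads off $\Norm_{F/\Q}(\mu)=-1$; then if $\ord_\fq(\lambda)$ were too large, $\mu$ would be $\equiv 1\pmod{4\OO_F}$ and its norm would be $\equiv 1\pmod 4$, a contradiction.

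More precisely, I would first invoke Lemma~\ref{lem:simplify} to replace $(\lambda,\mu)$ by a solution (denoted the same) with $\lambda\in\OO_F\cap\OO_S^\times$, $\mu\in\OO_F^\times$, and the same value of $m_{\lambda,\mu}$. Since $\ord_\fq(\mu)=0$, it suffices to show that $k:=\ord_\fq(\lambda)<2\ord_\fq(2)$.

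Next I would extract information from $p$. The hypothesis $\gcd(n,p-1)=1$ implies $\gcd(n,(p-1)/2)=1$ and, since $p\ge 5$ is odd, also that $n$ is odd. By Lemma~\ref{lem:pm1} we have $\mu\equiv\pm 1\pmod{\fp}$. If $\mu\equiv 1\pmod{\fp}$ then $\fp\mid\lambda$; but $\lambda$ is a unit at every prime of $\OO_F$ outside $\{\fq\}$, and $\fp\neq\fq$ (as $p\neq 2$), a contradiction. Hence $\mu\equiv -1\pmod{\fp}$. Lemma~\ref{lem:norm}, together with $n$ odd, yields $\Norm_{F/\Q}(\mu)\equiv -1\pmod{p}$, and since $\mu\in\OO_F^\times$ has norm $\pm 1$ and $p\ge 5$, this forces $\Norm_{F/\Q}(\mu)=-1$.

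Finally, suppose for contradiction that $k\ge 2\ord_\fq(2)=\ord_\fq(4)$. Because $\fq$ is the unique prime of $\OO_F$ above $2$ and $\lambda$ is a unit outside $\fq$, a prime-by-prime valuation check gives $\lambda/4\in\OO_F$; equivalently $\mu\equiv 1\pmod{4\OO_F}$. Passing to the normal closure $L$ of $F/\Q$, each embedding $\sigma\colon F\hookrightarrow L$ sends this congruence to $\sigma(\mu)\equiv 1\pmod{4\OO_L}$; multiplying over all conjugates gives $\Norm_{F/\Q}(\mu)\equiv 1\pmod{4\OO_L\cap\Z}=4\Z$, which contradicts $\Norm_{F/\Q}(\mu)=-1$. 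Hence $k<2\ord_\fq(2)$ and we are done.

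I do not anticipate any significant obstacle; the only mildly delicate point is the integrality $\lambda/4\in\OO_F$, which uses crucially that $2$ has a unique prime in $\OO_F$ and that $\lambda$ is an $S$-unit. Everything else is a direct application of Lemmas~\ref{lem:simplify}, \ref{lem:pm1}, and \ref{lem:norm}.
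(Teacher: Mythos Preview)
Your proof is correct and follows essentially the same approach as the paper's: use Lemma~\ref{lem:simplify} to reduce to $\lambda\in\OO_F\cap\OO_S^\times$, $\mu\in\OO_F^\times$, then derive a contradiction between $\Norm_{F/\Q}(\mu)\equiv 1\pmod 4$ (from $\mu\equiv 1\pmod{4\OO_F}$) and $\Norm_{F/\Q}(\mu)=-1$ (from $\mu\equiv -1\pmod\fp$ via Lemmas~\ref{lem:pm1} and~\ref{lem:norm}, using $n$ odd). The only difference is cosmetic: the paper runs the mod~$4$ computation first and the mod~$p$ computation second, whereas you reverse the order.
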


\begin{proof}
Suppose that $m_{\lambda,\mu} \ge 2 \ord_\fq(2)$.
By Lemma~\ref{lem:simplify},
there is a solution $(\lambda^\prime,\mu^\prime)$
to the $S$-unit equation \eqref{eqn:sunit}
with $\lambda^\prime \in \OO_F \cap \OO_S^\times$ and $\mu^\prime \in \OO_F^\times$
so that
$\ord_\fq(\lambda^\prime) 
= m_{\lambda^\prime,\mu^\prime} = m_{\lambda,\mu} \ge 2\ord_\fq(2)$.
Since $\mu^\prime=1-\lambda^\prime$ we see that
$\mu^\prime \equiv 1 \pmod{4}$.
Hence $\Norm_{F/\Q}(\mu^\prime) \equiv 1 \pmod{4}$.
But $\Norm_{F/\Q}(\mu^\prime)=\pm 1$ as $\mu^\prime$ is a unit.
Hence $\Norm_{F/\Q}(\mu^\prime)=1$.

Next we utilize the assumption $\gcd(n,p-1)=1$.
From Lemma~\ref{lem:pm1}, we have
$\mu^\prime \equiv \pm 1 \pmod{\fp}$.
If $\mu^\prime \equiv 1 \pmod{\fp}$
then $\lambda^\prime =1-\mu^\prime\equiv 0 \pmod{\fp}$
and this gives a contradiction, since $\lambda^\prime \in \OO_S^\times$
and $\fp \notin S$. Thus $\mu^\prime \equiv -1 \pmod{\fp}$.
But as $\gcd(n,p-1)=1$, the degree $n$ is odd.
By Lemma~\ref{lem:norm} we have
$\Norm_{F/\Q}(\mu^\prime) \equiv (-1)^n \equiv -1 \pmod{p}$
and so $\Norm_{F/\Q}(\mu^\prime)=-1$. This gives a contradiction,
thereby establishing the lemma.
\end{proof}

Note that the $S$-unit equation \eqref{eqn:sunit}
has the following three solutions $(\lambda,\mu)=(1/2,1/2)$, $(-1,2)$, $(2,-1)$.
The following lemma says that if $2$ is inert then every other
solution must
have the same valuations as these.
\begin{lemma}\label{lem:solvals}
Suppose $2$ is inert in $F$.
Then every solution
$(\lambda,\mu)$ to the $S$-unit equation \eqref{eqn:sunit}
satisfies
\[
        (\ord_\fq(\lambda),\ord_\fq(\mu))  \in \{ (-1,-1),~(0,1),~(1,0)\}.
\]
\end{lemma}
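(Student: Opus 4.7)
The plan is to combine the valuation bound of Lemma~\ref{lem:valbound} with the non-existence result for the ordinary unit equation supplied by Theorem~\ref{thm:unitcrit}. Since $2$ is inert, $\ord_\fq(2)=1$, so Lemma~\ref{lem:valbound} immediately gives $m_{\lambda,\mu}<2$, which is to say $\ord_\fq(\lambda),\,\ord_\fq(\mu)\in\{-1,0,1\}$. The pair $(a,b)=(\ord_\fq(\lambda),\ord_\fq(\mu))$ therefore lies in a finite set of nine candidates.

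I would then run through these nine candidates and eliminate those incompatible with $\ord_\fq(\lambda+\mu)=\ord_\fq(1)=0$, using only the ultrametric inequality. When $a\ne b$, the ultrametric inequality is an equality, $\ord_\fq(\lambda+\mu)=\min(a,b)$, so we must have $\min(a,b)=0$; this leaves the two off-diagonal cases $(0,1)$ and $(1,0)$. When $a=b$ we only get $\ord_\fq(\lambda+\mu)\ge a$, forcing $a\le 0$, and so the only diagonal survivors are $(-1,-1)$ and $(0,0)$.

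The only remaining step with any substance is to rule out $(0,0)$. But if $\ord_\fq(\lambda)=\ord_\fq(\mu)=0$ then $\lambda,\mu\in\OO_F^\times$, so $(\lambda,\mu)$ is a solution of the ordinary unit equation~\eqref{eqn:unit}. The hypothesis $\gcd(n,p-1)=1$ trivially implies $\gcd(n,(p-1)/2)=1$, and $p$ is totally ramified in $F$ by assumption, so Theorem~\ref{thm:unitcrit} applies and yields a contradiction. Hence $(a,b)\in\{(-1,-1),(0,1),(1,0)\}$, as desired.

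I do not expect a real obstacle here: the difficult work has already been packaged into Lemma~\ref{lem:valbound} (the global step that kills large valuations using the norm argument and Lemma~\ref{lem:pm1}) and into Theorem~\ref{thm:unitcrit} (which disposes of the ordinary unit equation). Everything else in the argument is a finite case check based on the ultrametric inequality, and the one subtle point one must not forget is simply to observe the implication $\gcd(n,p-1)=1\Rightarrow\gcd(n,(p-1)/2)=1$ so that Theorem~\ref{thm:unitcrit} is genuinely available.
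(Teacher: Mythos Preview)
Your proposal is correct and follows essentially the same route as the paper: invoke Lemma~\ref{lem:valbound} (with $\ord_\fq(2)=1$ since $2$ is inert) to bound the valuations, use the relation $\lambda+\mu=1$ to cut down the possibilities, and appeal to Theorem~\ref{thm:unitcrit} to eliminate the $(0,0)$ case. The only difference is cosmetic: the paper phrases the first two steps as ``$m_{\lambda,\mu}\in\{0,1\}$'' rather than an explicit nine-case ultrametric check.
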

\begin{proof}
Let $(\lambda,\mu)$ be a solution to \eqref{eqn:sunit}.
        From Lemma~\ref{lem:valbound} we know that $m_{\lambda,\mu}=0$ or $1$.
However if $m_{\lambda,\mu}=0$ then $(\lambda,\mu)$
is a solution to the unit equation \eqref{eqn:unit}
and this contradicts Theorem~\ref{thm:unitcrit}.
Therefore $m_{\lambda,\mu}=1$. Now this combined
with the relation $\lambda+\mu=1$ yields the lemma.
\end{proof}

\begin{proof}[Proof of Theorem~\ref{thm:pram}]
If $2$ is ramified then Lemma~\ref{lem:valbound}
        ensures that every solution to \eqref{eqn:sunit}
        satisfies \eqref{eqn:condsRam}.
If $2$ is inert the Lemma~\ref{lem:solvals} assures us that every solution
to \eqref{eqn:sunit} satisfies \eqref{eqn:conds}.
Moreover, since $\gcd(n,p-1)=1$, the degree of $F$ is odd.
Theorem~\ref{thm:pram} follows immediately from
Theorem~\ref{thm:FS}.
\end{proof}

\section{Proof of Theorem~\ref{thm:Triantafillou}}\label{sec:Triantafillou}
The following lemma is a slight
generalization of an idea that is implicit in \cite{Triantafillou}.
\begin{lemma}\label{lem:3adic}
Let $F$ be a number field in which $3$ splits completely.
Let $S$ be a finite set of primes of $F$ that is disjoint
from the primes above $3$.
Let $(\lambda,\mu)$
be a solution to the $S$-unit equation \eqref{eqn:sunit}
with $\lambda$, $\mu \in \OO_F$.
Then
\[
\lambda \equiv \mu \equiv -1 \pmod{3}.
\]
\end{lemma}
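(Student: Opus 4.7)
The plan is to use the fact that complete splitting of $3$ in $F$ makes the residue fields at the primes above $3$ trivial (each is $\F_3$), so the condition that $\lambda$ and $\mu$ are units at these primes forces very few residue possibilities, and the equation $\lambda+\mu=1$ will then single out the $(-1,-1)$ residue.

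More precisely, write $3\OO_F=\fp_1 \cdots \fp_n$ where $n=[F:\Q]$ and each $\OO_F/\fp_i\cong \F_3$. By hypothesis none of the $\fp_i$ lies in $S$, and $\lambda,\mu \in \OO_F \cap \OO_S^\times$, so $\lambda$ and $\mu$ are units modulo each $\fp_i$. Hence their residues lie in $\F_3^\times=\{\pm 1\}$. The relation $\lambda+\mu=1$ reduced modulo $\fp_i$ then leaves as the only compatible pair of signs $\lambda\equiv\mu\equiv -1 \pmod{\fp_i}$, since the other three sign combinations give residue sums $0$, $0$, or $2\equiv -1$, never $1$.

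Finally, since this holds for every $i$ and the $\fp_i$ are pairwise coprime, the Chinese Remainder Theorem gives $\lambda+1$ and $\mu+1$ in $\fp_1\cdots\fp_n=3\OO_F$, which is the asserted congruence. There is no real obstacle here; the only point requiring a moment of care is verifying that the hypothesis $S \cap \{\fp_1,\dotsc,\fp_n\}=\emptyset$ is exactly what guarantees that $\lambda$ and $\mu$, a priori only $S$-units, in fact have unit residues at every $\fp_i$.
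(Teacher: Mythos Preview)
Your proof is correct and follows essentially the same approach as the paper's own proof: both reduce modulo each prime $\fp_i$ above $3$, use that $\OO_F/\fp_i\cong\F_3$ and $\fp_i\notin S$ to force $\lambda,\mu\equiv\pm 1\pmod{\fp_i}$, eliminate all sign pairs except $(-1,-1)$ via $\lambda+\mu=1$, and then combine the congruences since $3\OO_F=\fp_1\cdots\fp_n$. Your explicit enumeration of the four sign combinations and invocation of the Chinese Remainder Theorem are slightly more verbose than the paper's version, but the argument is identical.
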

\begin{proof}
Let $\fp_1,\dotsc,\fp_n$ be the primes of $F$
above $3$. Then $\OO_F/\fp_i\cong \F_3$, and so the possible
residue classes modulo $\fp_i$ are $0$, $1$, $-1$.
However $\lambda$, $\mu \in \OO_S^\times$ and $\fp_i \notin S$, so
$\lambda \not\equiv 0 \pmod{\fp_i}$
and $\mu \not\equiv 0 \pmod{\fp_i}$. Hence
$\lambda \equiv \pm 1 \pmod{\fp_i}$ and $\mu \equiv \pm 1 \pmod{\fp_i}$.
But $\lambda+\mu=1$. It follows that
$\lambda \equiv \mu \equiv -1 \pmod{\fp_i}$.
The lemma follows as
$3\OO_F$ is the product of the distinct primes $\fp_1,\dotsc,\fp_n$.
\end{proof}

\begin{proof}[Proof of Theorem~\ref{thm:Triantafillou}]
Let $F$ be a number field of degree $n$. Suppose
that $3 \nmid n$
and that $3$ splits completely in $F$.
Let $(\lambda,\mu)$ be a solution to the unit
equation \eqref{eqn:unit}. By Lemma~\ref{lem:3adic},
applied with $S=\emptyset$, we have
$\lambda=-1+3\phi$, $\mu=-1+3\psi$
where $\phi$, $\psi \in \OO_F$. Moreover, from $\lambda+\mu=1$
we obtain $\phi+\psi=1$.
Let
$\phi_1,\dotsc,\phi_n$ be the images of $\phi$ under the
$n$ embeddings $F \hookrightarrow \overline{F}$. As $\lambda$
is a unit
\begin{multline*}
\pm 1=\Norm_{F/\Q}(\lambda)=(-1+3\phi_1) \cdots (-1+3\phi_n)
\equiv\\ (-1)^n + (-1)^{n-1} \cdot 3 \Trace_{F/\Q}(\phi) \pmod{9}.
\end{multline*}
By considering all the choices for $\pm 1$ and $(-1)^n$,
we obtain
$3 \Trace_{F/\Q}(\phi) \equiv -2$, $2$ or $0
\pmod{9}$. The first two are plainly  impossible and so
$\Trace_{F/\Q}(\phi) \equiv 0 \pmod{3}$. Similarly $\Trace_{F/\Q}(\psi)
\equiv 0 \pmod{3}$.
But, as $\phi+\psi=1$,
\[
        n=\Trace_{F/\Q}(\phi+\psi)=\Trace_{F/\Q}(\phi)+\Trace_{F/\Q}(\psi) \equiv 0 \pmod{3},
\]
giving a contradiction.
\end{proof}

\section{Proof of Theorems~\ref{thm:23} and~\ref{thm:23ram}}\label{sec:23}
\subsection*{Proof of Theorem~\ref{thm:23}}
We now prove Theorem~\ref{thm:23}. Thus we let
$F$ be a totally real field of degree $n \equiv 1$ or $5 \pmod{6}$,
and suppose that $2$ is inert in $F$ and $3$ totally splits in $F$.
As before, we write $\fq=2\OO_F$,
and let $S=\{\fq\}$. Note that $2$ and $3$
do not divide the degree $n$.
To deduce Theorem~\ref{thm:23} from Theorem~\ref{thm:FS}
all we need to do is show that every solution $(\lambda,\mu)$
to the $S$-unit equation \eqref{eqn:sunit}
satisfies \eqref{eqn:conds}.
Just as in the proof of Theorem~\ref{thm:pram}
it is enough to show that $m_{\lambda,\mu}=1$
for every solution $(\lambda,\mu)$ to \eqref{eqn:sunit}.
We know from Theorem~\ref{thm:Triantafillou}
that $m_{\lambda,\mu} \ne 0$.
Suppose $m_{\lambda,\mu} \ge 2$.
By Lemma~\ref{lem:simplify}
there is a solution $(\lambda^\prime,\mu^\prime)$
to \eqref{eqn:sunit}
such that $\lambda^\prime \in \OO_F$,
$\mu^\prime \in \OO_F^\times$,
and $\ord_\fq(\lambda^\prime)=m_{\lambda^\prime,\mu^\prime}=
m_{\lambda,\mu} \ge 2$. Thus $\mu^\prime=1-\lambda^\prime \equiv 1 \pmod{4}$,
and hence $\Norm_{F/\Q}(\mu^\prime)=1$.

However, by Lemma~\ref{lem:3adic},
we have $\mu^\prime \equiv -1 \pmod{3}$
and so $\Norm_{F/\Q}(\mu^\prime)=(-1)^n=-1$
since $n$ is odd.
This gives a contradiction, and completes the proof of
Theorem~\ref{thm:23}.

\subsection*{Proof of Theorem~\ref{thm:23ram}}
Finally we prove Theorem~\ref{thm:23ram}. Thus we let
$F$ be a totally real field of odd degree $n$,
and suppose that $2$ is totally ramified in $F$ and that $3$ totally splits in $F$.
We write $\fq$ for the unique prime above $2$
and let $S=\{\fq\}$.
We claim that every solution to $(\lambda,\mu)$ to the
$S$-unit equation \eqref{eqn:sunit} satisfies $m_{\lambda,\mu} < 2 \ord_{\fq}(2)$.
Our claim combined with Theorem~\ref{thm:FS} immediately implies
Theorem~\ref{thm:23ram}.
Suppose $(\lambda,\mu)$ is a solution to the $S$-unit equation
with $m_{\lambda,\mu} \ge 2 \ord_{\fq}(2)$.
By Lemma~\ref{lem:simplify}
there is a solution $(\lambda^\prime,\mu^\prime)$
to \eqref{eqn:sunit}
such that $\lambda^\prime \in \OO_F$,
$\mu^\prime \in \OO_F^\times$,
and $\ord_\fq(\lambda^\prime)=m_{\lambda^\prime,\mu^\prime}=
m_{\lambda,\mu} \ge 2\ord_\fq(2)$. Thus $\mu^\prime=1-\lambda^\prime \equiv 1 \pmod{4}$.
The remainder of the argument is identical to that in the above proof of Theorem~\ref{thm:23}.

\section{A conjectural generalization to arbitrary number fields}\label{sec:Haluk}
Theorem~\ref{thm:FS} is critical for the proofs of Theorems~\ref{thm:pram},
\ref{thm:23} and \ref{thm:23ram}.
As previously mentioned, the proof of that theorem relies
on the extraordinary progress in proving modularity lifting
theorems over totally real fields.
Unfortunately, our understanding of modularity over non-totally real
number fields is largely conjectural. However, in \cite{Haluk},
a version of Theorem~\ref{thm:FS} is established
for general (as opposed to totally real)  number fields assuming
two standard conjectures from the Langlands programme.
In this section we give versions of Theorems~\ref{thm:pram}
and \ref{thm:23ram} which are valid for general number
fields $F$, assuming those two conjectures. For the precise
statements of the two conjectures, we refer to \cite{Haluk};
instead we give a brief indication of what they are:
\begin{itemize}
\item Conjecture 3.1 of \cite{Haluk} is a weak version
of Serre's modularity conjecture
for odd, absolutely irreducible, continuous $2$-dimensional
mod $\ell$ representations of $\Gal(\overline{F}/F)$
that are finite flat at every prime above $\ell$;
\item Conjecture 4.1 of \cite{Haluk} states that
every weight $2$ newform for $\GL_2$ over $F$
with integer Hecke eigenvalues has an associated
elliptic curve over $F$, or a fake elliptic curve
over $F$.
\end{itemize}
The following is a special case of \cite[Theorem 1.1]{Haluk}.
\begin{theorem}\label{thm:Haluk} (\c{S}eng\"{u}n and Siksek)
Let $F$ be a number field for which
Conjectures 3.1 and 4.1 of \cite{Haluk} hold.
Assume that $2$ is totally ramified in $F$, write $\fq$ for the unique prime ideal above $2$, and let $S=\{\fq\}$.
Suppose that
        every solution $(\lambda,\mu)$
        to \eqref{eqn:sunit} satisfies
\[
\max\{ \lvert \ord_\fq(\lambda) \rvert,~ \lvert \ord_\fq(\mu) \rvert\} \le 4 \ord_\fq(2).
\]
Then the asymptotic Fermat's Last Theorem holds over $F$.
\end{theorem}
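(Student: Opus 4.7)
The plan is to derive Theorem~\ref{thm:Haluk} directly from \cite[Theorem 1.1]{Haluk} by specialization. Since the statement is explicitly described as a special case of that result, the work involved in the proof reduces to verifying that the hypotheses of the cited theorem match the ones we have: we are restricting from the general set-up of \cite{Haluk} (an arbitrary number field, a general auxiliary set of primes above $2$, and a general condition on $S$-unit solutions) to the single case where $2$ is totally ramified, $S=\{\fq\}$ with $\fq$ the unique prime above $2$, and the valuation bound takes the stated form $4\ord_\fq(2)$. First I would place the statement of \cite[Theorem 1.1]{Haluk} side-by-side with Theorem~\ref{thm:Haluk} and check that the hypotheses on $S$, the reduction-type requirements on the Frey curve at $\fq$, and the valuation-bound condition translate across correctly in this ramified-at-$2$ regime.

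For the reader's benefit, I would then briefly recall the Frey-curve strategy underlying \cite{Haluk}. Given a hypothetical non-trivial solution to $x^\ell+y^\ell+z^\ell=0$ over $F$ for a large prime $\ell$, one attaches a Frey elliptic curve $E/F$. Standard arguments, including Merel's uniform boundedness theorem and image-of-inertia analysis, show that for $\ell$ large enough the mod-$\ell$ representation $\bar\rho_{E,\ell}$ is absolutely irreducible, odd, and finite flat at primes above $\ell$. Conjecture~3.1 of \cite{Haluk} then yields modularity of $\bar\rho_{E,\ell}$ via a weight-two newform $f$ over $F$. A Ribet-style level-lowering argument forces the level of $f$ to be a bounded power of $\fq$, and after extracting a subsequence the Hecke eigenvalues can be assumed to lie in $\Z$. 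Conjecture~4.1 of \cite{Haluk} then supplies an elliptic curve (or a fake elliptic curve) $E'/F$ corresponding to $f$. The $j$-invariant of such an $E'$ with good reduction away from $\fq$ is controlled by the $S$-unit equation via the standard Legendre-type parametrization, and matching it against the $j$-invariant of the Frey curve forces an $S$-unit solution violating the hypothesis $\max\{\lvert\ord_\fq(\lambda)\rvert,\lvert\ord_\fq(\mu)\rvert\} \le 4\ord_\fq(2)$.

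The main obstacle, and the reason the theorem is only conditional, is that the entire chain of inferences rests on Conjectures~3.1 and~4.1 of \cite{Haluk}, which are instances of the Langlands programme not currently known for non-totally-real $F$. Once those are granted, the derivation of this particular special case from \cite[Theorem 1.1]{Haluk} is essentially bookkeeping, and no new input beyond that of \cite{Haluk} is required.
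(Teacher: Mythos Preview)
Your proposal is correct and matches the paper's treatment: the paper does not give an independent proof of this theorem but simply records it as a special case of \cite[Theorem 1.1]{Haluk}, which is exactly what you do. Your additional sketch of the Frey-curve strategy from \cite{Haluk} is expository elaboration beyond what the paper provides, but it is consistent with that reference and introduces no new argument or gap.
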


We note that the assumption that $F$ is totally real played no role in the proofs
of Theorems \ref{thm:pram}, \ref{thm:23} and \ref{thm:23ram} except when
invoking Theorem \ref{thm:FS}. We also note that Theorems \ref{thm:FS}
and \ref{thm:Haluk} have identical hypotheses and conclusions
for the case when $2$ is totally ramified in $F$, except
for the two additional conjectural assumptions in Theorem \ref{thm:Haluk}.
The following two theorems are proved simply by
invoking Theorem \ref{thm:Haluk} instead of Theorem \ref{thm:FS}
in the proofs of
Theorems~\ref{thm:pram} and~\ref{thm:23ram}.
\begin{theorem}\label{thm:pramGen}
Let $F$ be a number field of degree $n$,
for which
Conjectures 3.1 and 4.1 of \cite{Haluk} hold,
and let $p \ge 5$ be a prime.
Suppose
\begin{enumerate}[label=(\alph*)]
\item $\gcd(n,p-1)=1$;
\item $2$ totally ramifies in $F$;
\item $p$ totally ramifies in $F$.
\end{enumerate}
Then the asymptotic Fermat's Last Theorem holds over $F$.
\end{theorem}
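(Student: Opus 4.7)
The plan is to follow the proof of Theorem~\ref{thm:pram} almost verbatim, substituting Theorem~\ref{thm:Haluk} for Theorem~\ref{thm:FS} at the final step. The first thing I would verify is that Lemma~\ref{lem:valbound} remains valid in the present setting, even though the hypotheses of Theorem~\ref{thm:pramGen} do not include total reality of $F$. Inspecting the proof of that lemma, the only properties of $F$ actually used are: (i) that $p$ totally ramifies in $F$, which is what feeds Lemmas~\ref{lem:charpol}, \ref{lem:norm} and \ref{lem:pm1}; (ii) that $\gcd(n,p-1)=1$, which (since $p\ge 5$) forces $n$ to be odd and enables the final contradiction $\Norm_{F/\Q}(\mu^\prime)=-1$ versus $\Norm_{F/\Q}(\mu^\prime)=1$; and (iii) that global units of $\OO_F$ have absolute norm $1$. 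Total reality plays no role whatsoever in the argument, so Lemma~\ref{lem:valbound} carries over unchanged.

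The conclusion of that lemma is that every solution $(\lambda,\mu)$ of the $S$-unit equation \eqref{eqn:sunit} satisfies $m_{\lambda,\mu} < 2\ord_\fq(2)$. Since $2$ totally ramifies in $F$ and $S=\{\fq\}$, this bound is strictly stronger than
\[
\max\{\lvert \ord_\fq(\lambda)\rvert,\, \lvert \ord_\fq(\mu)\rvert\} \le 4\ord_\fq(2),
\]
which is exactly the hypothesis required by Theorem~\ref{thm:Haluk}. Under the standing assumption that Conjectures~3.1 and~4.1 of \cite{Haluk} hold for $F$, Theorem~\ref{thm:Haluk} then applies and yields the asymptotic Fermat's Last Theorem over $F$.

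Because the whole argument reduces to the observation that each ingredient used in the totally ramified case of Theorem~\ref{thm:pram} is insensitive to total reality of $F$, there is no genuine obstacle. The only subtle point worth flagging in the write-up is that Lemma~\ref{lem:valbound} truly needs the stronger hypothesis $\gcd(n,p-1)=1$, not merely $\gcd(n,(p-1)/2)=1$: one uses Lemma~\ref{lem:pm1} (which requires only the weaker gcd) to force $\mu^\prime \equiv \pm 1 \pmod{\fp}$, but one additionally needs $n$ odd so that $\mu^\prime \equiv -1 \pmod{\fp}$ yields $\Norm_{F/\Q}(\mu^\prime) \equiv -1 \pmod p$. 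This is precisely condition (a) of Theorem~\ref{thm:pramGen}, and it is what distinguishes its hypotheses from those of Theorem~\ref{thm:unitcrit}.
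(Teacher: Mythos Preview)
Your proposal is correct and follows essentially the same approach as the paper: the authors explicitly remark that total reality of $F$ plays no role in the proof of Theorem~\ref{thm:pram} except at the point where Theorem~\ref{thm:FS} is invoked, and Theorem~\ref{thm:pramGen} is obtained simply by substituting Theorem~\ref{thm:Haluk} at that step. Your careful verification that Lemma~\ref{lem:valbound} goes through unchanged, and your remark on why the full hypothesis $\gcd(n,p-1)=1$ (forcing $n$ odd) is needed, are accurate and make explicit what the paper leaves to the reader.
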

\begin{theorem}\label{thm:23ramGen}
Let $F$ be a number field of degree $n$,
for which
Conjectures 3.1 and 4.1 of \cite{Haluk} hold.
Suppose
\begin{enumerate}[label=(\alph*)]
\item $n$ is odd;
\item $2$ totally ramifies in  $F$;
\item $3$ totally splits in $F$.
\end{enumerate}
Then the asymptotic Fermat's Last Theorem holds over $F$.
\end{theorem}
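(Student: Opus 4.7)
The plan is to mimic the proof of Theorem~\ref{thm:23ram} verbatim, substituting the conjectural Theorem~\ref{thm:Haluk} for the unconditional Theorem~\ref{thm:FS} at the final step. This works because the only place totally realness was used in the proof of Theorem~\ref{thm:23ram} was in invoking Theorem~\ref{thm:FS}, as noted in the paragraph preceding the statement of Theorem~\ref{thm:pramGen}.

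More concretely, I would set $\fq$ to be the unique prime of $F$ above $2$ and $S=\{\fq\}$. The goal is to prove the hypothesis of Theorem~\ref{thm:Haluk}, namely that every solution $(\lambda,\mu)$ to the $S$-unit equation~\eqref{eqn:sunit} satisfies $m_{\lambda,\mu}\le 4\ord_\fq(2)$. I would in fact establish the stronger bound $m_{\lambda,\mu}<2\ord_\fq(2)$. Suppose for contradiction that some solution violates this. By Lemma~\ref{lem:simplify} I may replace $(\lambda,\mu)$ by a solution $(\lambda^\prime,\mu^\prime)$ with $\lambda^\prime\in\OO_F\cap\OO_S^\times$, $\mu^\prime\in\OO_F^\times$, and $\ord_\fq(\lambda^\prime)=m_{\lambda^\prime,\mu^\prime}\ge 2\ord_\fq(2)$. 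From $\mu^\prime=1-\lambda^\prime$ one reads off $\mu^\prime\equiv 1\pmod{4}$, so $\Norm_{F/\Q}(\mu^\prime)\equiv 1\pmod 4$; combined with $\Norm_{F/\Q}(\mu^\prime)=\pm 1$ this forces $\Norm_{F/\Q}(\mu^\prime)=1$.

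On the other hand, the assumption that $3$ splits completely in $F$, together with Lemma~\ref{lem:3adic} applied to the integral solution $(\lambda^\prime,\mu^\prime)$ and $S=\{\fq\}$ (which is disjoint from the primes above $3$), gives $\mu^\prime\equiv -1\pmod 3$. Then Lemma~\ref{lem:norm}, applied with $p=3$ at each of the $n$ primes above $3$ (or directly via the factorisation $3\OO_F=\prod\fp_i$), yields $\Norm_{F/\Q}(\mu^\prime)\equiv(-1)^n\pmod 3$, and since $n$ is odd this is $-1$, contradicting $\Norm_{F/\Q}(\mu^\prime)=1$. Hence the claim holds, and Theorem~\ref{thm:Haluk} delivers asymptotic FLT over $F$.

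There is essentially no obstacle: every ingredient (Lemmas~\ref{lem:simplify}, \ref{lem:norm}, \ref{lem:3adic}) is already in place and is stated for arbitrary number fields, not just totally real ones. The whole point of this section is that once one has a version of Theorem~\ref{thm:FS} valid for general $F$ (under Conjectures~3.1 and~4.1 of \cite{Haluk}), the earlier arguments transport over unchanged. Thus the proof is literally a one-line observation: repeat the proof of Theorem~\ref{thm:23ram}, invoking Theorem~\ref{thm:Haluk} at the final step.
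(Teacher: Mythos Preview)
Your proposal is correct and matches the paper's approach exactly: the paper explicitly says Theorems~\ref{thm:pramGen} and~\ref{thm:23ramGen} ``are proved simply by invoking Theorem~\ref{thm:Haluk} instead of Theorem~\ref{thm:FS} in the proofs of Theorems~\ref{thm:pram} and~\ref{thm:23ram}.'' One small slip: Lemma~\ref{lem:norm} as stated requires $p$ to be \emph{totally ramified}, so it does not literally apply with $p=3$ here; but your parenthetical alternative (writing $\mu'=-1+3\phi$ and expanding the product of conjugates modulo~$3$) is the correct justification, and is what the paper uses implicitly.
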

Unfortunately, we are unable to prove similar statements in the case $2$ is inert.
Indeed, the existence of a degree $1$ prime above $2$ is critical
in \cite{Haluk} at two points. It is needed when proving that the
mod $\ell$ representation of the Frey elliptic curve is absolutely irreducible,
which is a prerequisite for applying \cite[Conjecture 3.1]{Haluk}.
It is also needed after invoking \cite[Conjecture 4.1]{Haluk}
to rule out the possibility that a particular weight $2$
newform with rational Hecke eigenvalues is associated
to a fake elliptic curve. We note in passing that the Fermat equation $x^\ell+y^\ell+z^\ell=0$ has the solution $(1,\zeta_3,\zeta_3^2)$ for all $\ell \ne 3$, where $\zeta_3=(-1+\sqrt{-3})/2$.
The existence of this solution suggests that variants of the above theorems with $2$ inert might be harder to prove.

\bibliographystyle{plain}
\bibliography{FLT.bib}

\end{document}